\DeclareMathOperator{\grad}{grad}
\DeclareMathOperator{\RicM}{^{\it M}\!Ric}
\DeclareMathOperator{\RicN}{^{\it N}\!Ric}
\DeclareMathOperator{\dif}{d}
\DeclareMathOperator{\Lie}{\mathcal{L}}
\renewcommand{\H}{\mathscr{H}}
\newcommand{\V}{\mathscr{V}}
\DeclareMathOperator{\dH}{d^{\H}}
\def \a{\alpha}
\def \b{\beta}
\def \g{\gamma}
\def \l{\lambda}
\def \O{\Omega}
\def \phi{\varphi}
\def \Phi{\varPhi}
\def \r{\rho}
\def \D{\Delta}
\def \R{\mathbb{R}}
\def \C{\mathbb{C}\,}
\def\widecheckg{g^{\hspace*{-2.5pt}\vbox to 5pt{\hbox to
0pt{\LARGE$\check{}$}}}\hspace*{2pt}}
\def\widecheckl{\lambda^{\hspace*{-3.5pt}\vbox to 8pt{\hbox to
0pt{\LARGE$\check{}$}}}\hspace*{2pt}}
\begin{document}

\title{On Ricci solitons and\\
twistorial harmonic morphisms}
\author{Paul Baird and Radu~Pantilie\,\dag}
\thanks{\dag\,Acknowledges that this work was done during a three months visit at the University of Brest, France, 
supported by the Centre National de la Recherche Scientifique, and by a grant of the Romanian National Authority for
Scientific Research, CNCS-UEFISCDI, project number PN-II-ID-PCE-2011-3-036}
\email{\href{mailto:Paul.Baird@univ-brest.fr}{Paul.Baird@univ-brest.fr},
       \href{mailto:Radu.Pantilie@imar.ro}{Radu.Pantilie@imar.ro}}
\address{P.~Baird, D\'epartement de Math\'ematiques, Laboratoire C.N.R.S. U.M.R. 6205,
Universit\'e de Bretagne Occidentale, 6, Avenue Victor Le Gorgeu, CS 93837,
29238 Brest Cedex 3, France}
\address{R.~Pantilie, Institutul de Matematic\u a ``Simion Stoilow'' al Academiei Rom\^ane,
C.P. 1-764, 014700, Bucure\c sti, Rom\^ania}
\subjclass[2010]{35Q51, 53C43}
\keywords{Ricci soliton, twistorial harmonic morphism, the Gibbons--Hawking construction, the Beltrami fields construction}

\newtheorem{thm}{Theorem}[section]
\newtheorem{lem}[thm]{Lemma}
\newtheorem{cor}[thm]{Corollary}
\newtheorem{prop}[thm]{Proposition}

\theoremstyle{definition}

\newtheorem{defn}[thm]{Definition}
\newtheorem{rem}[thm]{Remark}
\newtheorem{exm}[thm]{Example}

\numberwithin{equation}{section}

\maketitle
\thispagestyle{empty}
\vspace{-7mm} 
\section*{Abstract}
\begin{quote}
{\footnotesize
We study the soliton flow on the domain of a twistorial harmonic morphism between Riemannian manifolds of dimensions four and three. 
Assuming real-analyticity, we prove that, for the Gibbons--Hawking construction, any soliton flow is uniquely determined by its 
restriction to any local section of the corresponding harmonic morphism. For the Beltrami fields construction, we identify a contour integral whose 
vanishing characterises the trivial soliton flows.}
\end{quote}

\section{Introduction}

A solution of the Ricci flow which evolves by scaling and diffeomorphism is called a \emph{Ricci soliton}.   Specifically, if $g_t = c_t\psi_t^*g$ satisfies the equation
$$
\frac{\partial g_t}{\partial t} = - 2\RicM (g_t)
$$
 on some time interval $[0, \delta )$, where $c_t$ is a family of positive scalars such that $c_0 = 1$ and $\psi_t$ is a family of diffeomorphisms satisfying $\psi_0 ={\rm id}$\,, 
 then
\begin{equation} \label{e:Ric_s}
\RicM+ag+\tfrac12\Lie_{E\!}g=0\;;
\end{equation}
where $2a = c^{\prime}_0$ and the vector field $E$, called the \emph{soliton flow}, is given at each $x \in M$ by $E_x= \frac{\dif}{\dif\!t}\psi_t(x)\vert_{t = 0}$\,.  Conversely, a 
solution to (\ref{e:Ric_s}) on a Riemannian manifold $(M,g)$ gives a small time solution to the Ricci flow equation, under a completeness assumption.   
Ricci solitons occur as rescaled limits at singularity formation and as asymptotic limits of immortal solutions, that is solutions that exist for all future time \cite{Lo}\,.  Both of these limits are interpreted in terms of Cheeger-Gromov-Hamilton pointed convergence of Ricci flows \cite{Ha2}\,.  In the case when $E = \grad\!f$ is the gradient of a function, then the soliton is said to be of \emph{gradient type}.  A soliton is called \emph{shrinking, steady} or \emph{expanding} according as the constant $a$ is negative, zero or positive, respectively.  See the reference \cite{Ch-etal} for an overview.\\  
\indent 
Any soliton metric which is Einstein is called \emph{trivial}.  In dimension $3$\,, any compact soliton is trivial \cite{Iv-1}\,.  On the other hand, in dimension $4$\,, 
non-trivial soliton metrics in the form of K\"ahler metrics of cohomogeneity one on certain projective bundles on $\C\!P^n$ have been found by Koiso \cite{Ko}\,. 
Also, on $\R^n$ there exist the well-known \emph{Gaussian solitons} with flow given by $E=\grad\bigl(-\frac{a}{2}|x-x_0|^2\bigr)$\,, where $x_0\in \R^n$ is some arbitrary point.\\    
\indent 
In this article we are particularly concerned with questions of existence and uniqueness in dimension $4$\,.  In order to address these issues, we make the additional assumption that the metric $g$ supports a twistorial harmonic morphism onto a $3$-manifold (see below).  Up to homotheties, any such harmonic morphism $\phi:(M,g)\to(N,h)$\,, with nonintegrable horizontal distribution,  is locally given either by the so-called Gibbons--Hawking or by the Beltrami fields constructions.  
Both of these methods for constructing a metric involve specifying data on the codomain $N$ from which the metric $g$ is derived.  
A similar idea was applied in \cite{BaiDan} to construct $3$-dimensional solitons, by supposing the existence of a semi-conformal mapping onto a surface.   
Note that by a result in \cite{BaiPan}\,, any horizontally conformal conformal submersion from a $3$-dimensional conformal manifold to a surface $P$ 
can be extended to a unique twistorial map from its $4$-dimensional heaven space to $P$.\\ 
\indent 
In Section \ref{section:GibHaw}\,, we suppose that $(M,g)$ is given by applying the Gibbons-Hawking construction to a real-analytic Riemannian $3$-manifold $(N,h)$\,, 
in particular, $g$ is expressed in terms of a harmonic function on $(N,h)$\,.  As a consequence, this defines a twistorial harmonic morphism $\phi :(M,g) \to (N,h)$\,.  
Then we show that any real-analytic soliton flow on $(M,g)$ is uniquely determined by its restriction to any local section of $\phi$ (Theorem \ref{thm:Ric_s_thm_Gib-Haw}\,).   
We, also, obtain an ansatz for the construction of a soliton flow from a harmonic function and a solution to the monopole equation on $N$ 
(Theorem \ref{thm:charact_w}\,).\\   
\indent 
In Section \ref{sec:Beltrami}\,, we suppose that $(M,g)$ is given by the Beltrami fields construction.  Now, the metric $g$ is given in terms of a $1$-form on $(N, h)$ satisfying the Beltrami fields equation.  Theorem \ref{thm:Ric_s_thm_Beltrami} shows the equivalence between  the triviality of any real-analytic soliton flow $E$ and the vanishing of a contour integral involving the complexification $(\Lie_E\!h)^{\C\!}$ to a local complexification of $N$.

\section{Soliton flows on the domain of a twistorial harmonic morphism}

\indent
A \emph{harmonic morphism} between Riemannian manifolds is a map which, locally, pulls back harmonic functions
to harmonic functions (see \cite{BaiWoo2} for more information on harmonic morphisms).\\
\indent
Let $\phi:(M,g)\to(N,h)$ be a submersive harmonic morphism, with $M$ and $N$ oriented,
$\dim M=4$\,, $\dim N=3$\,. Denote by $\l$ the dilation of $\phi$ and let $\V={\rm ker}\dif\!\phi$ and $\H=\V^{\perp}$
be the vertical and horizontal distributions of $\phi$\,, respectively. We orient $\V$ and $\H$ such that the isomorphisms
$TM=\V\oplus\H$ and $\H=\phi^*(TN)$ be orientation preserving.\\
\indent
Let $V$ be the (fundamental) vertical vector field, characterised by the fact that it is vertical, positive
and $g(V,V)=\l^2$. Then, locally, $g=\l^{-2}\,\phi^*(h)+\l^2\,\theta^2$, where $\theta$ is the vertical dual of $V$;
that is, $\theta(V)=1$ and $\theta|_{\H}=0$\,.\\
\indent
Then $\phi$ is twistorial (with respect to the opposite orientation on $M$) if and only if the following relation holds \cite{PanWoo-sd}\,:
\begin{equation} \label{e:-twist}
\dH\!\bigl(\l^{-2}\bigr)=*_{\H}\O\;,
\end{equation}
where $\O=\dif\!\theta$\,. It follows that, at least outside the set where $\O=0$\,, we have
$c=V(\l^{-2})$ is constant (on each component) and the Ricci tensors $\RicM$ and $\RicN$ of $(M,g)$ and $(N,h)$\,,
respectively, are given by the following relations \cite{PanWoo-exm}\,:
\begin{equation} \label{e:ricci*}
\begin{split}
&\RicM|_{\V}=0\;,\quad\RicM|_{\V\otimes\H}=0\;,\\
&\RicM|_{\H}=\phi^*\bigl(\RicN\bigr)-\frac{c^2}{2}\,\phi^*(h)\;.
\end{split}
\end{equation}
\indent
Furthermore, we have the following facts (see \cite{PanWoo-exm}\,, \cite{PanWoo-sd} and the references therein):\\
\indent
\quad$\bullet$\, $(N,h)$ has constant sectional curvature if and only if $(M,g)$ is self-dual;\\
\indent
\quad$\bullet$\, $(N,h)$ has constant sectional curvature equal to $\frac{c^2}{4}$ if and only if $(M,g)$ is Einstein;
moreover, if $(M,g)$ is Einstein then it is Ricci-flat self-dual.\\
\indent
{}From now on, we shall assume $\O_x\neq0$\,, at each $x\in M$ (if $\O=0$ then, locally, $g$ is a warped-product;
see \cite{Bai-GT} and the references therein for results on soliton flows on such metrics).
Then there exists a unique basic vector field $Z$ such that $\iota_Z\O=0$ and which is projected onto a unit vector field on $(N,h)$\,.

\begin{prop} \label{prop:Ric_s_thm}
Let $\phi:(M,g)\to(N,h)$ be a twistorial harmonic morphism, and let $E$ be a vector field on $M$;
denote by $f$ and $F$ the function and the horizontal vector field, respectively, such that $E=fV+F$.\\
\indent
Then $E$ is a soliton flow, with the corresponding constant $a$\,, if and only if, locally, the following five relations hold:
\begin{equation} \label{e:Ric_s_thm}
\begin{split}
&V(f)+a-\tfrac12\l^2\bigl(fc+z\,\O(X,Y)\bigr)=0\;,\\
&X(f)-y\,\O(X,Y)+\l^{-4}V(x)=0\;,\\
&Y(f)+x\,\O(X,Y)+\l^{-4}V(y)=0\;,\\
&Z(f)+\l^{-4}V(z)=0\;,\\
&\Lie_F\bigl(\phi^*(h)\bigr)+\l^2\bigl(-c^2+fc+z\,\O(X,Y)+2a\l^{-2}\bigr)\phi^*(h)\\
&\hspace{6.3cm}+2\l^2\phi^*\bigl(\RicN\bigr)=0\,\;{\rm on}\;\H\,,
\end{split}
\end{equation}
where $(X,Y,Z)$ is a horizontal frame, projected by $\phi$ onto a positive orthonormal frame on $(N,h)$\,,
and $x\,,\,y\,,\,z$ are functions characterised by $F=xX+yY+zZ$.
\end{prop}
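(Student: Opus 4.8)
The plan is to read the soliton tensor $S:=\RicM+ag+\tfrac12\Lie_E g$ as an identity between symmetric $2$-tensors and to project it onto the three blocks of the splitting $TM=\V\oplus\H$, namely the $\V\otimes\V$, the $\V\otimes\H$ and the $\H\otimes\H$ components (one, three and six scalar equations, matching the five displayed relations, the last being tensorial on $\H$). By \eqref{e:ricci*} the curvature is carried only by the $\H\otimes\H$ block, where $\RicM|_{\H}=\phi^*(\RicN)-\tfrac{c^2}{2}\phi^*(h)$, while on the vertical and mixed blocks $\RicM$ vanishes, so there the equation reduces to $ag+\tfrac12\Lie_E g=0$. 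Writing $E=fV+F$ I would use the Leibniz rule $\Lie_{fV}g=f\,\Lie_V g+\dif\!f\odot(\iota_V g)$ together with $\iota_V g=\l^2\theta$ and $g|_{\H}=\l^{-2}\phi^*(h)$ to reduce every term to frame data.

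Next I would compute the brackets. Since the horizontal frame $(X,Y,Z)$ is basic and $V$ is vertical, each of $[V,X],[V,Y],[V,Z]$ is vertical; evaluating $\O=\dif\!\theta$ on $V$ gives $\iota_V\O=\Lie_V\theta$, so the horizontality of $\O$ — that is $\iota_V\O=0$, a feature of the twistorial structure — forces these brackets to vanish. Dually, the vertical parts of the horizontal brackets are governed by $\O$: from $\O=\dif\!\theta$ one gets $g(V,[X,Y])=-\l^2\,\O(X,Y)$, while $g(V,[X,Z])=g(V,[Y,Z])=0$ because $\iota_Z\O=0$. Feeding these into $\Lie_V g$ and $\Lie_F g$ turns the mixed components $S(V,X),S(V,Y),S(V,Z)$ into exactly the three middle relations of \eqref{e:Ric_s_thm} after dividing out $\l^2$, and $S(V,V)$ into the first one.

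For the $\l$-derivatives I would substitute \eqref{e:-twist}, which in the frame reads $X(\l^{-2})=Y(\l^{-2})=0$ and $Z(\l^{-2})=\O(X,Y)$, together with $V(\l^{-2})=c$. The essential simplification is that $(\Lie_V g)|_{\H}=c\,\phi^*(h)$: indeed $\Lie_V(\phi^*h)$ vanishes on $\H$ since $[V,X]$ is vertical and hence $\dif\!\phi[V,X]=0$, so only the term $V(\l^{-2})\,\phi^*(h)$ survives. Combining this with $(\Lie_F g)|_{\H}=F(\l^{-2})\,\phi^*(h)+\l^{-2}\Lie_F(\phi^*h)$ and $\l^2F(\l^{-2})=z\,\O(X,Y)$, and multiplying the $\H\otimes\H$ block by $\l^2$, assembles it into the last relation of \eqref{e:Ric_s_thm}. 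As the three blocks of a symmetric $2$-tensor equation are mutually independent, the five relations are together equivalent to $\RicM+ag+\tfrac12\Lie_E g=0$, which is the asserted equivalence.

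The main obstacle is the bracket bookkeeping of the two preceding paragraphs: one must express $g([F,V],V)$, $g(V,[F,X])$ and $(\Lie_F g)|_{\H}$ purely in terms of $f$, the components $x,y,z$, their frame-derivatives, $c$ and $\O(X,Y)$, and in particular verify that all the auxiliary components of $\O$ — those pairing $V$ with the horizontal directions, and $Z$ with anything — drop out. This rests precisely on the two facts $\iota_V\O=0$ and $\iota_Z\O=0$, whose mutual consistency with $\dif\!\O=0$ and with the constancy of $c$ is what makes the five components close up exactly as displayed.
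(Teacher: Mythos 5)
Your proposal follows essentially the same route as the paper's proof: decompose the soliton equation over the blocks $\V\otimes\V$, $\V\otimes\H$ and $\H\otimes\H$, use \eqref{e:ricci*} to confine the curvature terms to the horizontal block, and reduce all Lie derivatives to frame data via $X(\l)=Y(\l)=0$, $Z(\l^{-2})=\O(X,Y)$ and $V(\l^{-2})=c$, exactly as in the paper's computations of $(\Lie_{E}g)(V,V)$, $(\Lie_{E}g)(V,S)$ and $(\Lie_{E}g)|_{\H}$. The only (harmless) difference is that you make explicit the fact $\iota_V\O=0$, hence $[V,X]=[V,Y]=[V,Z]=0$ for the basic frame, which the paper uses tacitly when it drops the term $g([V,F],V)$ and identifies the horizontal part of $[V,F]$ with $V(x)X+V(y)Y+V(z)Z$.
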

\begin{proof}
Firstly, note that \eqref{e:Ric_s_thm} does not dependent of the pair $(X,Y)$\,, having the stated properties.
Also, \eqref{e:-twist} is (locally) equivalent to the conditions $X(\l)=Y(\l)=0$ and $Z(\l^{-2})=\O(X,Y)$\,.
Thus, by also using $V(\l^{-2})=c$\,, we obtain
\begin{equation} \label{e:for_(1)-(5)}
E(\ln\l)=-\tfrac12\,fc\l^2-\tfrac12\,z\l^2\O(X,Y)\;,
\end{equation}
which implies
\begin{equation} \label{e:1_for_(1)}
E(\ln\l)+V(f)+a=V(f)+a-\tfrac12\l^2\bigl(fc+z\,\O(X,Y)\bigr)\;.
\end{equation}
\indent
Also, we have
\begin{equation} \label{e:2_for_(1)}
\begin{split}
(\Lie_{E\!}g)(V,V)&=(\Lie_{fV\!}g)(V,V)+(\Lie_{F\!}g)(V,V)\\
&=fV(\l^2)+2g([V,fV],V)+F(\l^2)+2g([V,F],V)\\
&=fV(\l^2)+F(\l^2)+2\,V(f)\l^2\\
&=E(\l^2)+2\l^2V(f)\\
&=2\l^2\bigl(E(\ln\l)+V(f)\bigr)\;.
\end{split}
\end{equation}
\indent
Now, \eqref{e:ricci*}\,, \eqref{e:1_for_(1)} and \eqref{e:2_for_(1)} show that the first relation of \eqref{e:Ric_s_thm} is
equivalent to \eqref{e:Ric_s}\,, restricted to $\V$.\\
\indent
Further, for any horizontal vector field $S$, we have
\begin{equation} \label{e:for_(2)-(4)}
\begin{split}
(\Lie_{E\!}g)(V,S)&=g([V,E],S)+g(V,[S,E])\\
&=g\bigl(V(f)\,V+[V,F],S\bigr)+g\bigl(V,S(f)V+[S,F]\bigr)\\
&=g([V,F],S)+\l^2S(f)-\l^2\,\O(S,F)\\
&=\l^2\bigl(S(f)-\O(S,F)+\l^{-4}\phi^*(h)([V,F],S)\bigr)\;.
\end{split}
\end{equation}
\indent
{}From \eqref{e:for_(2)-(4)} we deduce that the second, third and fourth relations of
\eqref{e:Ric_s_thm} are equivalent to \eqref{e:Ric_s}\,, restricted to $\V\otimes\H$.\\
\indent
Next, we have
\begin{equation} \label{e:1_for_(5)}
\begin{split}
(\Lie_{E\!}g)|_{\H}&=\bigl(\Lie_E(\l^{-2}\phi^*(h))\bigr)|_{\H}\\
&=E(\l^{-2})\phi^*(h)|_{\H}+\l^{-2}\bigl(\Lie_E\phi^*(h)\bigr)|_{\H}\\
&=E(\l^{-2})\phi^*(h)|_{\H}+\l^{-2}(\Lie_{fV\!}\phi^*(h))|_{\H}+\l^{-2}(\Lie_{F\!}\phi^*(h))|_{\H}\\
&=E(\l^{-2})\phi^*(h)|_{\H}+\l^{-2}(\Lie_{F\!}\phi^*(h))|_{\H}\\
&=-2\l^{-2}E(\ln\l)\phi^*(h)|_{\H}+\l^{-2}\bigl(\Lie_{F\!}\phi^*(h)\bigr)|_{\H}\;.
\end{split}
\end{equation}
\indent
{}From \eqref{e:for_(1)-(5)} and \eqref{e:1_for_(5)} we obtain
\begin{equation} \label{e:2_for_(5)}
(\Lie_{E\!}g)|_{\H}=\bigl(fc+z\,\O(X,Y)\bigr)\phi^*(h)|_{\H}+\l^{-2}\bigl(\Lie_{F\!}\phi^*(h)\bigr)|_{\H}\;.
\end{equation}
\indent
{}From \eqref{e:ricci*} and \eqref{e:2_for_(5)} we deduce that the fifth relation of
\eqref{e:Ric_s_thm} is equivalent to \eqref{e:Ric_s}\,, restricted to $\H$.
\end{proof}

\section{Ricci solitons and the Gibbons--Hawking construction} \label{section:GibHaw}

\indent
Let $\phi:(M,g)\to(N,h)$ be a twistorial harmonic morphism with $V(\l^{-2})=0$\,. This is equivalent to the fact that, locally,
there exists a function $u$ and a one-form $A$ on $N$ satisfying the monopole equation $\dif\!u=*\dif\!A$
and such that $$g=u\,h+u^{-1}(\dif\!t+A)^2\;,$$ with $\phi:M=N\times\R\to N$ the projection;
in particular, $\l^{-2}=u$, $V=\frac{\partial}{\partial t}$ is a Killing vector field,
and $\theta=\dif\!t+A$\,.\\
\indent
Note that, if $(N,h)$ is real-analytic then, as $u$ and $A$ satisfy $\D u=0$ and $\D A=0$\,,
we have that, also, $g$ is real-analytic.\\
\indent
Next, we state the main result of this section.

\begin{thm} \label{thm:Ric_s_thm_Gib-Haw}
Let $(M,g)$ be given by the Gibbons--Hawking construction, with $(N,h)$ real-analytic.\\
\indent
Then any real-analytic soliton flow on $(M,g)$ is uniquely determined by its restriction to
any local section of $\phi$\,.
\end{thm}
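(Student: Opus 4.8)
The plan is to extract from Proposition~\ref{prop:Ric_s_thm} the exact analytic structure of the soliton equations in the present setting and to recognise it as an analytic Cauchy problem for which every local section of $\phi$ is a non-characteristic initial hypersurface. First I would specialise that proposition to the Gibbons--Hawking case: here $V(\l^{-2})=0$, so $c=0$, and I choose the horizontal frame $(X,Y,Z)$ to be the horizontal lifts of a local positive orthonormal frame on $(N,h)$. Since $V=\partial/\partial t$ while $\l^{-2}=u$ and $\O=\dif\!\theta=\dif\!A$ are pulled back from $N$, this frame is basic, $[V,X]=[V,Y]=[V,Z]=0$, and the functions $\l$ and $\O(X,Y)$ are independent of $t$; moreover $g$, hence all these coefficients, is real-analytic. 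With $c=0$, the first four of the relations \eqref{e:Ric_s_thm} then express $V(f)$, $V(x)$, $V(y)$, $V(z)$ explicitly, their right-hand sides involving $f,x,y,z$ together with the horizontal derivatives $X(f),Y(f),Z(f)$ of $f$ alone.

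The decisive structural observation is that no horizontal derivative of $x$, $y$ or $z$ occurs: the only spatial (i.e.\ $N$-direction) derivatives entering the first four relations are those of $f$, and the first relation contains no spatial derivative whatsoever. Viewing these four relations as a first-order evolution system $V(f,x,y,z)=\Phi\bigl(\,\cdot\,,(f,x,y,z),\partial^{\H}(f,x,y,z)\bigr)$, its principal symbol in a conormal direction $\zeta=\zeta_t\,\dif\!t+\zeta'$ is $\zeta_t\,\mathrm{Id}-\sigma(\zeta')$, where $\sigma(\zeta')$ is nilpotent: only its $f$-column is nonzero while its $f$-row vanishes, so $\sigma(\zeta')^2=0$. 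Hence $\det\bigl(\zeta_t\,\mathrm{Id}-\sigma(\zeta')\bigr)=\zeta_t^{\,4}$, and the characteristic hypersurfaces are exactly those to which $V$ is tangent. A local section of $\phi$ is a graph $t=t_0(\,\cdot\,)$ over $N$, transverse to $V$, and is therefore non-characteristic for every choice of $t_0$.

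I would then conclude by the Cauchy--Kovalevskaya theorem. For a fixed constant $a$, the first four relations of \eqref{e:Ric_s_thm} form a real-analytic evolution system for which the section is a non-characteristic analytic hypersurface; consequently there is at most one real-analytic solution $(f,x,y,z)$ with a prescribed restriction to it, so two real-analytic soliton flows with the same $a$ and the same restriction must coincide in a neighbourhood of the section, and hence on the union of the fibres through it by analytic continuation. The mechanism underlying non-characteristicity, which I would record as the conceptual content of the symbol computation, is that the data on the section determine all tangential derivatives of $f,x,y,z$; the first relation gives $V(f)$ with no spatial derivative, from which $X(f),Y(f),Z(f)$ are recovered on the section as tangential derivatives corrected by the slope of $t_0$ times $V(f)$; the remaining three relations then yield $V(x),V(y),V(z)$; and differentiating the system by $V$, using $[V,X]=0$ and the $t$-independence of the coefficients, propagates this inductively to all mixed derivatives along the section.

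I expect the main obstacle to be precisely this non-characteristic verification, namely isolating the fact that horizontal derivatives enter the first four relations only through $f$, so that the symbol is nilpotent; the rest is bookkeeping, once the basic frame is chosen so that the coefficients are $t$-independent and analytic. A secondary point I would address is the soliton constant: $a$ enters the first relation as an additive constant and is invisible in the bare restriction, but I expect it to be recovered from the restriction by taking the trace over $\H$ of the fifth relation of \eqref{e:Ric_s_thm} along the section. Indeed, $\trace\bigl(\Lie_F(\phi^*(h))\bigr)$ is, along the section, an affine function of $a$ (through the dependence of $V(x),V(y),V(z)$ on $a$ traced above), and the explicit summand $2a$ in the coefficient of $\phi^*(h)$ contributes a term that does not cancel, giving an affine equation for $a$ with nonvanishing leading coefficient; this removes the apparent freedom in $a$, so that the restriction determines $a$ as well and hence determines the flow outright. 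Note that the fifth relation is otherwise not needed for the uniqueness itself, being the additional constraint cut out by $E$ genuinely being a soliton flow.
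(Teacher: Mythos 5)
Your argument is correct and rests on the same underlying mechanism as the paper's proof, but it is packaged differently. The paper expands $f$ and $F$ in power series in $t$ about the section $N\times\{0\}$ (Corollary \ref{cor:Ric_s_Gib-Haw}) and reads off an explicit recursion: the first and fifth relations of \eqref{e:Ric_s_Gib-Haw} determine $f_{j+1}$ and $z_{j+1}$ from $z_j$, $z_{j-1}$ and $f_j$\,, and the third and fourth relations then determine $x_{j+1}$, $y_{j+1}$\,; this is exactly the Taylor--coefficient recursion that your Cauchy--Kovalevskaya argument encapsulates, and, like yours, it uses only the ``evolution'' part of the system (the first four relations of \eqref{e:Ric_s_thm_Gib-Haw}), the remaining relations being constraints. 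Your route buys two things. First, a direct treatment of an arbitrary local section via the non-characteristicity of any hypersurface transverse to $V$ (your nilpotent-symbol computation is valid, since the only spatial derivatives in the first four relations are those of $f$ and they do not feed back into the $f$-equation); the paper instead reduces to $N\times\{0\}$\,, implicitly via the gauge change $t\mapsto t-t_0$\,, $A\mapsto A+\dif t_0$\,. Second, an explicit discussion of the constant $a$\,, which the paper's recursion silently treats as given although it is not part of ``the restriction of $E$''. Your claim that $a$ is recovered from the fifth relation is right, but the assertion that the affine equation for $a$ has nonvanishing leading coefficient deserves the check you omit: if two flows with constants $a$ and $a+\delta$ share the same restriction to $N\times\{0\}$\,, then $F_1$ changes by $-\delta u^{-2}A^{\sharp}$ and the last relation of \eqref{e:Ric_s_Gib-Haw} changes by $2\delta\bigl(h+u^{-2}A\odot A\bigr)$\,, whose trace $2\delta\bigl(3+u^{-2}|A|^2\bigr)$ vanishes only for $\delta=0$\,; so the $2a$ term could in principle have cancelled against the $A\odot F_1^{\flat}$ term, and one must verify that it does not.
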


\indent
The proof of Theorem \ref{thm:Ric_s_thm_Gib-Haw} will be given below.\\
\indent
From now on, in this section, \emph{we shall denote by $X$, $Y$ and $Z$ the projections onto $N$ of the corresponding
vector fields appearing in \eqref{e:Ric_s_thm}\,.} Then, as $\O=\dif\!A$\,, we have $Z=\frac{1}{|\!\dif\!u|}\,\grad u$\,, $X(u)=Y(u)=0$\,.
Hence, the horizontal lift of $X$ is $\widetilde{X}=-A(X)\frac{\partial}{\partial t}+X$ and, similarly, for $Y$ and $Z$\,.
Consequently, $\O(X,Y)=Z(u)=|\!\dif\!u|$\,.\\
\indent
Thus, Proposition \ref{prop:Ric_s_thm} gives the following result.

\begin{cor} \label{cor:Ric_s_thm_Gib-Haw}
Let $(M,g)$ be given by the Gibbons--Hawking construction. Then a vector field $E=fV+F$, with $F$ horizontal,
is a soliton flow on $(M,g)$\,, with the corresponding constant $a$\,, if and only if the following five relations hold:\\
\begin{equation} \label{e:Ric_s_thm_Gib-Haw}
\begin{split}
&\frac{\partial f}{\partial t}+a-\frac12\,zu^{-1}|\!\dif\!u|=0\;,\\
&-A(X)\frac{\partial f}{\partial t}+X(f)-y\,|\!\dif\!u|+u^2\,\frac{\partial x}{\partial t}=0\;,\\
&-A(Y)\frac{\partial f}{\partial t}+Y(f)+x\,|\!\dif\!u|+u^2\,\frac{\partial y}{\partial t}=0\;,\\
&-A(Z)\frac{\partial f}{\partial t}+Z(f)+u^2\,\frac{\partial z}{\partial t}=0\;,\\
&\Lie_{F\!}h+2u^{-1}\RicN+u^{-1}\bigl(z\,|\!\dif\!u|+2au\bigr)h=0\,\;{\rm on}\;\H\,.
\end{split}
\end{equation}
\end{cor}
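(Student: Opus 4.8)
The plan is to obtain the Corollary as a direct specialization of Proposition \ref{prop:Ric_s_thm} to the Gibbons--Hawking data, so that the whole argument reduces to a substitution together with one identification that needs a small justification. First I would record the simplifications valid in this setting: since $V(\l^{-2})=0$ we have $c=0$; moreover $\l^{-2}=u$ (hence $\l^2=u^{-1}$ and $\l^{-4}=u^2$), $V=\frac{\partial}{\partial t}$, and $\O=\dif\!A$. Using $Z=\frac1{|\!\dif\!u|}\grad u$ together with \eqref{e:-twist} in the form $Z(\l^{-2})=\O(X,Y)$ gives $\O(X,Y)=Z(u)=|\!\dif\!u|$, as already noted before the statement.

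For the first four relations of \eqref{e:Ric_s_thm} I would simply insert these values. The first relation becomes $\frac{\partial f}{\partial t}+a-\frac12 zu^{-1}|\!\dif\!u|=0$ upon replacing $V(f)$ by $\frac{\partial f}{\partial t}$, $c$ by $0$, $\l^2$ by $u^{-1}$ and $\O(X,Y)$ by $|\!\dif\!u|$. For the next three I would use that the symbols $X,Y,Z$ in \eqref{e:Ric_s_thm} denote horizontal vector fields on $M$, whereas in the Corollary they denote their projections; since the horizontal lift is $\widetilde{X}=-A(X)\frac{\partial}{\partial t}+X$ (and similarly for $Y,Z$), each occurrence of the form $X(f)$ in \eqref{e:Ric_s_thm} expands as $-A(X)\frac{\partial f}{\partial t}+X(f)$, while $\l^{-4}V(x)$ becomes $u^2\frac{\partial x}{\partial t}$. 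This reproduces the second, third and fourth relations verbatim.

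The only step requiring genuine (if mild) care is the fifth relation, where I must match $\Lie_F\bigl(\phi^*(h)\bigr)|_{\H}$ with $\Lie_F\!h$ on $N$. Writing $F=x\widetilde{X}+y\widetilde{Y}+z\widetilde{Z}$ and using the lift formula gives $F=F^N-A(F^N)V$, where $F^N=xX+yY+zZ$ is the projected (and possibly $t$-dependent) field. Since $\phi^*(h)$ is pulled back from $N$ it is $V$-invariant and annihilates $V$, so $\Lie_{A(F^N)V}\bigl(\phi^*(h)\bigr)=A(F^N)\Lie_V\bigl(\phi^*(h)\bigr)+\dif\!\bigl(A(F^N)\bigr)\odot\iota_V\bigl(\phi^*(h)\bigr)=0$; hence $\Lie_F\bigl(\phi^*(h)\bigr)=\Lie_{F^N}\bigl(\phi^*(h)\bigr)$. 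A coordinate computation then shows that $\Lie_{F^N}\bigl(\phi^*(h)\bigr)$ has no $\dif\!t$-component and that only $N$-derivatives of the components of $F^N$ enter, so it coincides with the pullback of the Lie derivative of $h$ along $F^N$ computed on $N$ for each fixed $t$; this is exactly the identification $\Lie_F\bigl(\phi^*(h)\bigr)|_{\H}=\phi^*(\Lie_F\!h)|_{\H}$.

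Finally I would substitute $c=0$, $\l^2=u^{-1}$, $\l^{-2}=u$ and $\O(X,Y)=|\!\dif\!u|$ into the fifth relation of \eqref{e:Ric_s_thm}, obtaining $\Lie_F\bigl(\phi^*(h)\bigr)|_{\H}+u^{-1}\bigl(z\,|\!\dif\!u|+2au\bigr)\phi^*(h)|_{\H}+2u^{-1}\phi^*\bigl(\RicN\bigr)|_{\H}=0$, and then read this off on $N$ via the identifications $\phi^*(h)|_{\H}\leftrightarrow h$, $\phi^*\bigl(\RicN\bigr)|_{\H}\leftrightarrow\RicN$ and the one just established, which yields the fifth relation of \eqref{e:Ric_s_thm_Gib-Haw}. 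The main obstacle, such as it is, lies precisely in the bookkeeping of this last identification: one must verify that the $t$-dependence of $x,y,z$ and the vertical correction $-A(F^N)V$ contribute nothing on $\H$, which is exactly what the $V$-invariance of $\phi^*(h)$ guarantees.
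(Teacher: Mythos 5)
Your overall route is the paper's: the corollary is obtained by bare substitution of the Gibbons--Hawking data ($c=V(\l^{-2})=0$, $\l^{-2}=u$, $V=\frac{\partial}{\partial t}$, $\O(X,Y)=Z(u)=|\!\dif\!u|$, and the lift formula $\widetilde{X}=-A(X)\frac{\partial}{\partial t}+X$) into the five relations of Proposition \ref{prop:Ric_s_thm}; the paper offers nothing beyond the sentence ``Proposition \ref{prop:Ric_s_thm} gives the following result'', and your treatment of the first four relations is correct.

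However, your handling of the fifth relation contains a genuine error. You claim that $\Lie_{F^N}\bigl(\phi^*(h)\bigr)$ has no $\dif\!t$-component, that only $N$-derivatives of the components of $F^N$ enter, and hence that $\Lie_F\bigl(\phi^*(h)\bigr)|_{\H}=\phi^*(\Lie_F\!h)|_{\H}$ with the right-hand side a fixed-$t$ Lie derivative on $N$. This fails when $x,y,z$ depend on $t$: in coordinates $(t,n^i)$ on $M=N\times\R$ one computes
\[
\bigl(\Lie_{F^k\partial_{n^k}}\phi^*(h)\bigr)\bigl(\tfrac{\partial}{\partial t},\tfrac{\partial}{\partial n^j}\bigr)=h_{kj}\,\frac{\partial F^k}{\partial t}\;,
\]
and since $\dif\!t|_{\H}=-A|_{\H}$ (because $\theta=\dif\!t+A$ vanishes on $\H$), evaluating on horizontal lifts yields
\[
\Lie_{F^N}\bigl(\phi^*(h)\bigr)\big|_{\H}=\phi^*\bigl(\Lie_{F^N}h\bigr)\big|_{\H}-2\,A\odot\bigl(\partial_tF^N\bigr)^{\flat}\;,
\]
not equality. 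The $V$-invariance of $\phi^*(h)$ disposes of the vertical correction $-A(F^N)V$, as you say, but it does nothing about the $t$-dependence of the coefficients; the discrepancy above is precisely the origin of the correction terms $2(j+1)A\odot F_{j+1}^{\flat}$ in Corollary \ref{cor:Ric_s_Gib-Haw}, which would be absent if your identification were valid. Fortunately the step you are trying to justify is not needed for Corollary \ref{cor:Ric_s_thm_Gib-Haw}: there the symbols $\Lie_{F\!}h$, $\RicN$ and $h$ denote the pullback objects restricted to $\H$ (so $\Lie_{F\!}h$ is just $\Lie_F\bigl(\phi^*(h)\bigr)|_{\H}$), and the fifth relation follows by the same direct substitution as the first four. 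The passage to genuine Lie derivatives on $N$ is deferred to Corollary \ref{cor:Ric_s_Gib-Haw}, where the $A\odot F_j^{\flat}$ terms are tracked explicitly.
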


\indent
Assuming real-analyticity, Corollary \ref{cor:Ric_s_thm_Gib-Haw} gives the following result.

\begin{cor} \label{cor:Ric_s_Gib-Haw}
Let $(M,g)$ be given by the Gibbons--Hawking construction, with $(N,h)$ real-analytic.
Let $E=fV+F$ be a real-analytic vector field on $(M,g)$\,, where $F$ is horizontal.\\
\indent
On writing, locally, $f=\sum_{j=0}^{\infty}t^jf_j$ and $F=\sum_{j=0}^{\infty}t^j\widetilde{F_j}$\,,
with $f_j$ functions on $N$, and $\widetilde{F_j}$ the horizontal lifts of vector fields $F_j=x_jX+y_jY+z_jZ$ on $N$,
we have that $E$ is a soliton flow on $(M,g)$\,, with the corresponding constant $a$\,, if and only if the following relations hold:\\
\begin{equation} \label{e:Ric_s_Gib-Haw}
\begin{split}
&f_{j+1}=\frac{1}{2(j+1)}\,z_ju^{-1}|\!\dif\!u|\;,\;\bigl(j\in\mathbb{N}\setminus\{0\}\bigr)\;,\\
&f_1=\frac12\,z_0u^{-1}|\!\dif\!u|-a\;,\\
&-(j+1)A(X)f_{j+1}+X(f_j)-y_j|\!\dif\!u|+(j+1)u^2x_{j+1}=0\;,\;\bigl(j\in\mathbb{N}\bigr)\;,\\
&-(j+1)A(Y)f_{j+1}+Y(f_j)+x_j|\!\dif\!u|+(j+1)u^2y_{j+1}=0\;,\;\bigl(j\in\mathbb{N}\bigr)\;,\\
&-(j+1)A(Z)f_{j+1}+Z(f_j)+(j+1)u^2z_{j+1}=0\;,\;\bigl(j\in\mathbb{N}\bigr)\;,\\
&\Lie_{F_j\!}h=2(j+1)A\odot F_{j+1}^{\flat}-z_ju^{-1}|\!\dif\!u|\,h\;,\;\bigl(j\in\mathbb{N}\setminus\{0\}\bigr)\;,\\
&\Lie_{F_0\!}h+2u^{-1}\RicN+\bigl(2a+z_0u^{-1}|\!\dif\!u|\bigr)h-2A\odot F_1^{\flat}=0\;.
\end{split}
\end{equation}
\end{cor}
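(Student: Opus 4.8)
The plan is to substitute the power series expansions of $f$ and $F$ into the five relations of Corollary \ref{cor:Ric_s_thm_Gib-Haw} and to equate coefficients of each power $t^j$. Real-analyticity guarantees that, locally, $f$ and $F$ admit such convergent expansions about a section $t=\text{const}$; and, crucially, in the Gibbons--Hawking construction the data $u$, $A$ and $\RicN$ are independent of $t$, so each factor in the relations of \eqref{e:Ric_s_thm_Gib-Haw} that is not $f$ or $F$ carries no $t$-dependence. Consequently no Cauchy products intervene, and each of the five relations holds if and only if all of its $t$-coefficients vanish. Thus the single system \eqref{e:Ric_s_thm_Gib-Haw} decouples into the infinite family \eqref{e:Ric_s_Gib-Haw}, indexed by $j\in\mathbb{N}$.

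First I would treat the four scalar relations. Writing $f=\sum_jt^jf_j$, $x=\sum_jt^jx_j$, and similarly for $y$ and $z$, one has $\frac{\partial f}{\partial t}=\sum_j(j+1)t^jf_{j+1}$ and $u^2\frac{\partial x}{\partial t}=\sum_j(j+1)t^ju^2x_{j+1}$, while $X$, $Y$, $Z$ differentiate only in the $N$-directions, so that $X(f)=\sum_jt^jX(f_j)$, and likewise for $Y$ and $Z$. Collecting the coefficient of $t^j$ in the first relation gives $(j+1)f_{j+1}+a\,\delta_{0j}-\tfrac12z_ju^{-1}|\!\dif\!u|=0$; the term $a\,\delta_{0j}$ forces the separate treatment of $j=0$ (which yields the stated formula for $f_1$) and $j\geq1$ (which yields the formula for $f_{j+1}$). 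The second, third and fourth relations then produce directly the corresponding three lines of \eqref{e:Ric_s_Gib-Haw}, valid for all $j\in\mathbb{N}$.

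The main work, and the expected main obstacle, lies in expanding the tensorial fifth relation $\Lie_{F}h+2u^{-1}\RicN+u^{-1}\bigl(z\,|\!\dif\!u|+2au\bigr)h=0$ on $\H$. Here I would compute $\bigl(\Lie_{F}(\phi^*h)\bigr)|_{\H}$ term by term from $F=\sum_jt^j\widetilde{F_j}$, using the product rule $\Lie_{t^j\widetilde{F_j}}(\phi^*h)=t^j\Lie_{\widetilde{F_j}}(\phi^*h)+2jt^{j-1}\,\dif\!t\odot\iota_{\widetilde{F_j}}(\phi^*h)$, where the factor $2$ reflects the normalisation $\alpha\odot\beta=\tfrac12(\alpha\otimes\beta+\beta\otimes\alpha)$. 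Two observations then do the work: since $\phi^*h$ is basic and $\widetilde{F_j}=-A(F_j)\frac{\partial}{\partial t}+F_j$, the vertical component contributes nothing, so $\Lie_{\widetilde{F_j}}(\phi^*h)=\phi^*(\Lie_{F_j}h)$; and $\iota_{\widetilde{F_j}}(\phi^*h)=\phi^*(F_j^{\flat})$, while on $\H$ one has $\dif\!t|_{\H}=-A|_{\H}$ because $\theta=\dif\!t+A$ vanishes on $\H$. Hence the product term restricted to $\H$ is $-2jt^{j-1}\phi^*(A\odot F_j^{\flat})$, and, after reindexing $j\mapsto j+1$, the coefficient of $t^j$ in $\bigl(\Lie_{F}(\phi^*h)\bigr)|_{\H}$ equals $\Lie_{F_j}h-2(j+1)A\odot F_{j+1}^{\flat}$. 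Finally, the algebraic terms contribute $z_ju^{-1}|\!\dif\!u|\,h$ at every order, whereas $2u^{-1}\RicN$ and $2ah$ enter only at $j=0$; equating coefficients of $t^j$ then yields the last two lines of \eqref{e:Ric_s_Gib-Haw}, with $j=0$ again singled out. Keeping track of the symmetric-product normalisation, the sign in $\dif\!t|_{\H}=-A|_{\H}$, and the reindexing is the only genuinely delicate part of the computation.
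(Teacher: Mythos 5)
Your proposal is correct and follows essentially the same route as the paper: the scalar relations are matched coefficient by coefficient, and the only substantive step is the identity $(\Lie_{t^j\widetilde{F_j}}h)(\widetilde{S},\widetilde{S})=t^j(\Lie_{F_j\!}h)(S,S)-2jt^{j-1}\bigl(A\odot F_j^{\flat}\bigr)(S,S)$, which the paper derives from $\widetilde{S}(t^j)=-jA(S)t^{j-1}$ and you derive equivalently from $\dif\!t|_{\H}=-A|_{\H}$. Your bookkeeping of the $j=0$ special cases and the reindexing agrees with \eqref{e:Ric_s_Gib-Haw}.
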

\begin{proof}
It is easy to see that the first four relations of \eqref{e:Ric_s_thm_Gib-Haw} are equivalent to the first five relations
of \eqref{e:Ric_s_Gib-Haw}\,.\\
\indent
Let $S$ be a vector field on $N$ and let $\widetilde{S}$ be its horizontal lift.
As $\widetilde{S}=-A(S)\frac{\partial}{\partial t}+S$, we have
$\widetilde{S}(t^j)=-jA(S)t^{j-1}$. Hence,
$$(\Lie_{t^j\widetilde{F_j}}h)(\widetilde{S},\widetilde{S})=t^j(\Lie_{F_j\!}h)(S,S)-2jt^{j-1}\bigl(A\odot F_j^{\flat}\bigr)(S,S)\;,$$
from which the last two relations follow quickly.
\end{proof}

\indent
Now, we can give the proof of Theorem \ref{thm:Ric_s_thm_Gib-Haw}\,.

\begin{proof}[Proof of Theorem \ref{thm:Ric_s_thm_Gib-Haw}]
It is sufficient to prove that $E$ is determined by its restriction to $N\times\{0\}$\,.\\
\indent
The first and the fifth equations of \eqref{e:Ric_s_Gib-Haw} give
$$-\frac12\,z_ju^{-1}|\!\dif\!u|A(Z)+\frac{1}{2j}\,Z\bigl(z_{j-1}u^{-1}|\!\dif\!u|\bigr)+(j+1)u^2z_{j+1}=0\;,$$
for any $j\in\mathbb{N}\setminus\{0\}$\,.\\
\indent
Also, the second and the fifth relations of \eqref{e:Ric_s_Gib-Haw} give
\begin{equation} \label{e:f_0}
-\frac12\,z_0u^{-1}|\!\dif\!u|A(Z)+aA(Z)+Z(f_0)+u^2z_1=0\;.
\end{equation}
\indent
Thus, $z_j$ and $f_j$ are determined by $z_0$ and $f_0$\,, for any $j\in\mathbb{N}$\,.\\
\indent
To complete the proof, just note that the third and fourth relations of \eqref{e:Ric_s_Gib-Haw} imply that
$x_j$ and $y_j$ are determined by $x_0$\,, $y_0$\,, $z_0$ and $f_0$\,, for any $j\in\mathbb{N}$\,.
\end{proof}

\indent
We, also, obtain that if $E$ is a real analytic
soliton flow on a Riemannian manifold, given by the Gibbons--Hawking construction, then its vertical part
is determined by its horizontal part, up to the choice of $f_0$ satisfying \eqref{e:f_0}\,.

\section{A particular case}

\indent
In this section, we continue the study of Section \ref{section:GibHaw} by tackling a particular case.

\begin{cor} \label{cor:Ric_s_F0=0=F2}
Let $(M,g)$ be given by the Gibbons--Hawking construction, with $(N,h)$ real-analytic.
Let $E=fV+t\widetilde{F_1}$ be a real-analytic vector field on $(M,g)$\,, where $\widetilde{F_1}$ is the horizontal lift
of the vector field $F_1$ on $N$.\\
\indent
Then $E$ is a soliton flow on $(M,g)$\,, with the corresponding constant $a$\,, if and only if
$f=f_0-at+\frac14\,b\,t^2$\,, where $b$ is a constant, $f_0$ is a function on $N$, and the following
relations hold:\\
\begin{equation} \label{e:Ric_s_F0=0=F2}
\begin{split}
&b=u^{-1}F_1(u)\;,\\
&bA=2\dif\!u\times F_1^{\flat}\;,\\
&aA+\dif\!f_0+u^2F_1^{\flat}=0\;,\\
&\Lie_{F_1\!}h=-\,b\,h\;,\\
&\RicN+\,a\,u\,h-uA\odot F_1^{\flat}=0\;.
\end{split}
\end{equation}
\end{cor}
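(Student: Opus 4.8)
The plan is to read the corollary off from Corollary~\ref{cor:Ric_s_Gib-Haw} by specialising to the case in which only the order-$1$ term of $F=\sum_j t^j\widetilde{F_j}$ survives. Indeed, $E=fV+t\widetilde{F_1}$ means exactly that $F_0=0$ and $F_j=0$ for every $j\geq2$, so that $E$ is a soliton flow with constant $a$ if and only if the relations \eqref{e:Ric_s_Gib-Haw} hold after imposing $x_0=y_0=z_0=0$ and $x_j=y_j=z_j=0$ for $j\geq2$. The whole argument then amounts to substituting these vanishings and recording which relations remain.

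First I would determine $f$. The second relation of \eqref{e:Ric_s_Gib-Haw} gives $f_1=-a$ (since $z_0=0$), while the first gives $f_{j+1}=\tfrac{1}{2(j+1)}z_ju^{-1}|\!\dif\!u|$, hence $f_j=0$ for $j\geq3$ and $f_2=\tfrac14 z_1u^{-1}|\!\dif\!u|$. Because $X(u)=Y(u)=0$ and $Z(u)=|\!\dif\!u|$, we have $F_1(u)=z_1|\!\dif\!u|$; setting $b:=u^{-1}F_1(u)$ therefore yields $f_2=\tfrac14 b$ and $f=f_0-at+\tfrac14 bt^2$, which is simultaneously the asserted form of $f$ and the first relation of \eqref{e:Ric_s_F0=0=F2}.

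Next I would feed these values into the third, fourth and fifth relations of \eqref{e:Ric_s_Gib-Haw}, grouped by $j$. For $j=0$ they assemble, as one-forms on $N$, into $aA+\dif\!f_0+u^2F_1^{\flat}=0$, the third relation of \eqref{e:Ric_s_F0=0=F2}. For $j=1$, using that $f_1$ is constant, $f_2=\tfrac14 b$ and $x_2=y_2=z_2=0$, they become $y_1|\!\dif\!u|=-\tfrac12 bA(X)$, $x_1|\!\dif\!u|=\tfrac12 bA(Y)$ and $bA(Z)=0$; computing $\dif\!u\times F_1^{\flat}=*(\dif\!u\wedge F_1^{\flat})$ in the frame $X,Y,Z$ with $\dif\!u=|\!\dif\!u|\,Z^{\flat}$ identifies these three with the single relation $bA=2\,\dif\!u\times F_1^{\flat}$. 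For $j=2$, since $f_3=0$ and the order-$2$ and order-$3$ components of $F$ vanish, they collapse to $X(f_2)=Y(f_2)=Z(f_2)=0$, i.e.\ $\dif\!f_2=0$; this is precisely where the constancy of $b$ comes from, and for $j\geq3$ the three relations are $0=0$. Finally, the seventh relation of \eqref{e:Ric_s_Gib-Haw} (with $F_0=0$ and $z_0=0$), after multiplication by $u/2$, gives $\RicN+au\,h-uA\odot F_1^{\flat}=0$, the fifth relation; and the sixth relation at $j=1$ (with $F_2=0$) gives $\Lie_{F_1\!}h=-b\,h$, the fourth relation, while at $j\geq2$ it is trivially satisfied.

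The only steps that are not pure bookkeeping are the observation that it is the $j=2$ instances of the third, fourth and fifth relations that force $b$ to be a genuine constant rather than merely a function on $N$, and the three-dimensional cross-product computation identifying the $j=1$ relations with $bA=2\,\dif\!u\times F_1^{\flat}$; I expect the sign conventions in this last computation to be the main point to get right. Assembling these observations gives the stated equivalence in both directions.
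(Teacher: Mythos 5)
Your proposal is correct and follows essentially the same route as the paper: both specialise Corollary \ref{cor:Ric_s_Gib-Haw} to $F_0=0$, $F_j=0$ for $j\geq2$, extract $f_1=-a$, $f_2=\tfrac14 z_1u^{-1}|\!\dif\!u|$, $f_j=0$ for $j\geq3$, and read off the five relations from the cases $j=0,1,2$ (the paper packages the third, fourth and fifth relations of \eqref{e:Ric_s_Gib-Haw} directly as the one-form identity $-(j+1)f_{j+1}A+\dif\!f_j+\dif\!u\times F_j^{\flat}+(j+1)u^2F_{j+1}^{\flat}=0$, which you recover componentwise). Your cross-product computation and sign conventions check out against the paper's formulation.
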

\begin{proof}
{}From the first two relations of \eqref{e:Ric_s_Gib-Haw} we obtain that $f_j=0$\,, for any $j\geq3$\,,
$f_2=\frac14\,z_1u^{-1}|\!\dif\!u|$\,, and $f_1=-a$\,.\\
\indent
Note that the third, fourth and fifth relations of \eqref{e:Ric_s_Gib-Haw} are equivalent to
\begin{equation} \label{e:Ric_s_Gib-Haw_345}
-(j+1)f_{j+1}A+\dif\!f_j+\dif\!u\times F_j^{\flat}+(j+1)u^2F_{j+1}^{\flat}=0\;,
\end{equation}
for any $j\in\mathbb{N}$\,.\\
\indent
Now, if $j\geq3$ then \eqref{e:Ric_s_Gib-Haw_345} is trivially satisfied, whilst for $j=2$ it gives that $\dif\!f_2=0$\,.
Thus, $b=z_1u^{-1}|\!\dif\!u|$ is constant; equivalently, $b=u^{-1}F_1(u)$ is constant.\\
\indent
The second and third relations of \eqref{e:Ric_s_F0=0=F2} are equivalent to \eqref{e:Ric_s_Gib-Haw_345} with $j=1$ and
$j=0$\,, respectively.\\
\indent
Finally, the last two relations of \eqref{e:Ric_s_F0=0=F2} are equivalent to the last two relations of \eqref{e:Ric_s_Gib-Haw}\,.
\end{proof}

\indent
With the same notations as in Corollary \ref{cor:Ric_s_Gib-Haw}\,, it is easy to see that we can weaken
the hypotheses of Corollary \ref{cor:Ric_s_F0=0=F2} to $(N,h)$ real-analytic and $F_0=F_2=0$\,.\\
\indent
Also, the second relations of \eqref{e:Ric_s_F0=0=F2} imply $A(\grad u)=0$ (condition which can always be satisfied,
locally). Moreover, the first two relations of \eqref{e:Ric_s_F0=0=F2} determine $F_1$\,, whilst the third requires
$\dif\bigl(aA+u^2F_1^{\flat}\bigr)=0$ to determine $f_0$\,, locally, up to a constant. Consequently, $b=0$ if and only if $F_1=0$
which implies $a=0$\,, $f_0$ is constant (otherwise, $\dif\!A=0$), and $(M,g)$ is Ricci flat self-dual.\\
\indent
Therefore from now on we shall assume $b\neq0$\,. 

\begin{lem} \label{lem:Ric_s_F0=0=F2}
Let $u$ be a harmonic function on a three-dimensional Riemannian manifold $(N,h)$\,.
Let $B$ be a local solution of the monopole equation $\dif\!u=*\dif\!B$ and let $w$ be a function
satisfying $(\grad w)(u)=bu^3+aB^{\sharp}(u)$\,.\\
\indent
Then there exists functions $v$ and $f_0$ such that the first three relations of \eqref{e:Ric_s_F0=0=F2}
are satisfied, with $A=B+\dif\!v$ and $F_1=-au^{-2}B^{\sharp}+u^{-2}\grad w$\,, if and only if
\begin{equation} \label{e:lem-Ric_s_F0=0=F2}
b\dif\!v-2u^{-2}\dif\!u\times\dif\!w+bB+2au^{-2}\dif\!u\times B=0\;.
\end{equation}
\indent
Consequently, the first three equations of \eqref{e:Ric_s_F0=0=F2} can be, locally, solved, up to a gauge transformation,
if and only if there exists a function $w$ such that the following two assertions hold:\\
\indent
\quad{\rm (i)} $(\grad u)(w)=bu^3+aB^{\sharp}(u)$\,;\\
\indent
\quad{\rm (ii)} $-2u^{-2}\dif\!u\times\dif\!w+bB+2au^{-2}\dif\!u\times B$ is closed.
\end{lem}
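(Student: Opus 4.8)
The plan is to substitute the given expressions $A=B+\dif\!v$ and $F_1=-au^{-2}B^{\sharp}+u^{-2}\grad w$ directly into the first three relations of \eqref{e:Ric_s_F0=0=F2} and to read off what each of them becomes. First I would treat the relation $b=u^{-1}F_1(u)$: since $F_1(u)=h(F_1,\grad u)=-au^{-2}B^{\sharp}(u)+u^{-2}(\grad w)(u)$, this relation is equivalent to $(\grad w)(u)=bu^3+aB^{\sharp}(u)$, which is exactly the hypothesis imposed on $w$ (note $(\grad w)(u)=(\grad u)(w)$ by the symmetry of $h$); hence the first relation holds automatically and carries no further information.

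Next I would expand the cross products in the second relation $bA=2\,\dif\!u\times F_1^{\flat}$. Writing $F_1^{\flat}=-au^{-2}B+u^{-2}\dif\!w$ together with $A=B+\dif\!v$, and using bilinearity of $\times$, this relation becomes $bB+b\,\dif\!v=-2au^{-2}\,\dif\!u\times B+2u^{-2}\,\dif\!u\times\dif\!w$, which, after collecting all terms on one side, is precisely \eqref{e:lem-Ric_s_F0=0=F2}. For the third relation I would compute $u^2F_1^{\flat}=-aB+\dif\!w$ and $aA=aB+a\,\dif\!v$, so that $aA+u^2F_1^{\flat}=\dif(av+w)$; thus $aA+\dif\!f_0+u^2F_1^{\flat}=0$ reduces to $\dif\bigl(f_0+av+w\bigr)=0$, which can always be solved by taking $f_0=-(av+w)$, up to an additive constant. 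Combining these three observations gives the asserted equivalence: granted the hypothesis on $w$, the first three relations are satisfiable (with $f_0$ then determined) if and only if \eqref{e:lem-Ric_s_F0=0=F2} holds.

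For the ``consequently'' part I would regard \eqref{e:lem-Ric_s_F0=0=F2} as an equation for $\dif\!v$; since $b\neq0$, it reads $b\,\dif\!v=2u^{-2}\,\dif\!u\times\dif\!w-bB-2au^{-2}\,\dif\!u\times B$, and by the Poincar\'e lemma a local primitive $v$ exists if and only if the right-hand side is closed, which is exactly condition (ii). Together with condition (i) --- the hypothesis on $w$ needed for the first relation --- this gives the backward implication, producing $F_1$, then $v$, then $f_0$. For the forward implication I would observe that the prescribed form of $F_1$ entails no loss of generality: from the third relation, $u^2F_1^{\flat}=-aA-\dif\!f_0=-aB-\dif(av+f_0)$, so setting $w:=-(av+f_0)$ recovers that form, after which the first and second relations deliver (i) and (ii) respectively. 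I expect no essential obstacle here: the argument is a direct substitution followed by a single application of the Poincar\'e lemma, and the only real care needed is to handle the Hodge-star/cross-product convention $\alpha\times\beta=*(\alpha\wedge\beta)$ and the musical isomorphisms consistently throughout.
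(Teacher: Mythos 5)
Your proposal is correct and follows essentially the same route as the paper: both arguments reduce the first relation to the hypothesis on $w$, identify the second relation with \eqref{e:lem-Ric_s_F0=0=F2} by direct expansion of the cross products, recognise the third relation as an exactness condition determining $f_0$ (equivalently, forcing $u^2F_1^{\flat}+aB$ to be exact, which is how the paper recovers the prescribed form of $F_1$), and invoke the Poincar\'e lemma together with $b\neq0$ for the ``consequently'' part. The only difference is organisational: the paper derives the form of $F_1$ first from the integrability of the third relation, whereas you substitute it and then justify it as no loss of generality in the forward implication.
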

\begin{proof}
As $A$ satisfies $\dif\!u=*\dif\!A$\,, it must (locally) be of the form $A=B+\dif\!v$\,, for a suitable function $v$.\\
\indent
Also, the third relation of \eqref{e:Ric_s_F0=0=F2} is satisfied, for a suitable $f_0$\,, if and only if
$a\dif\!u=-*\dif\bigl(u^2F_1^{\flat}\bigr)$\,. Thus, $F_1=-au^{-2}B^{\sharp}+u^{-2}\grad w$ for a suitable function $w$.
Then the first relation of \eqref{e:Ric_s_F0=0=F2} is satisfied if and only if $\grad w=bu^3+aB(\grad u)$\,.\\
\indent
To complete the proof, just note that, now, the second relation of \eqref{e:Ric_s_F0=0=F2}
is equivalent to \eqref{e:lem-Ric_s_F0=0=F2}\,.
\end{proof}

\indent
We do not have a reference for the following simple and easy to prove lemma.

\begin{lem} \label{lem:codif-wedge}
Let $\a$ and $\b$ be two one-forms on a Riemannian manifold, and let $\dif^*$ be the codifferential.
Then $$\dif^{*\!}(\a\wedge\b)=(\dif^*\!\a)\,\b-\a\,(\dif^*\!\b)-\bigl[\a^{\sharp},\b^{\sharp}\bigr]^{\flat}\;.$$
\end{lem}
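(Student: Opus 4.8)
The plan is to compute $\dif^*(\a\wedge\b)$ directly from the Levi-Civita connection, using the standard expression of the codifferential of any $p$-form $\o$ as $\dif^*\o=-\sum_i\iota_{e_i}\bigl(\nabla_{e_i}\o\bigr)$, where $(e_i)$ is a local orthonormal frame. First I would apply the Leibniz rule for $\nabla$ to obtain $\nabla_{e_i}(\a\wedge\b)=(\nabla_{e_i}\a)\wedge\b+\a\wedge(\nabla_{e_i}\b)$, and then contract with $\iota_{e_i}$ using the elementary identity $\iota_X(\e\wedge\g)=\e(X)\,\g-\g(X)\,\e$, valid for any one-forms $\e$ and $\g$.

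Carrying out the contraction and summing over $i$ produces four groups of terms. Two of them, $-\sum_i(\nabla_{e_i}\a)(e_i)\,\b$ and $\sum_i(\nabla_{e_i}\b)(e_i)\,\a$, I would identify through the identity $\dif^*\a=-\sum_i(\nabla_{e_i}\a)(e_i)$ (and its analogue for $\b$); these yield $(\dif^*\a)\,\b$ and $-\a\,(\dif^*\b)$, respectively. The remaining two groups combine into $\sum_i\b(e_i)\,\nabla_{e_i}\a-\sum_i\a(e_i)\,\nabla_{e_i}\b=\nabla_{\b^{\sharp}}\a-\nabla_{\a^{\sharp}}\b$, where I use that $\a^{\sharp}=\sum_i\a(e_i)\,e_i$ in an orthonormal frame.

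The key final step is to match this last expression with the bracket term. Since the Levi-Civita connection is metric and torsion-free, lowering indices in $[\a^{\sharp},\b^{\sharp}]=\nabla_{\a^{\sharp}}\b^{\sharp}-\nabla_{\b^{\sharp}}\a^{\sharp}$ gives $[\a^{\sharp},\b^{\sharp}]^{\flat}=\nabla_{\a^{\sharp}}\b-\nabla_{\b^{\sharp}}\a$, so that $\nabla_{\b^{\sharp}}\a-\nabla_{\a^{\sharp}}\b=-[\a^{\sharp},\b^{\sharp}]^{\flat}$, and the stated formula follows. I do not anticipate any genuine obstacle: the whole content is a Leibniz expansion, and the only thing requiring care is the bookkeeping of signs in the codifferential and in the interior-product identity, together with the correct use of the torsion-free property to produce the bracket. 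If one prefers, the same computation can be done in local coordinates by writing $(\a\wedge\b)_{ij}=\a_i\b_j-\a_j\b_i$ and using $\bigl(\dif^*(\a\wedge\b)\bigr)_j=-\nabla^i(\a\wedge\b)_{ij}$, which leads to the identical identification after a Leibniz expansion.
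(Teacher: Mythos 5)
Your computation is correct and complete: the Leibniz expansion of $-\sum_i\iota_{e_i}\nabla_{e_i}(\a\wedge\b)$ yields exactly $(\dif^*\!\a)\,\b-\a\,(\dif^*\!\b)+\nabla_{\b^{\sharp}}\a-\nabla_{\a^{\sharp}}\b$, and the torsion-free plus metric properties of the Levi-Civita connection convert the last two terms into $-\bigl[\a^{\sharp},\b^{\sharp}\bigr]^{\flat}$. The paper itself gives no proof (it declares the lemma ``simple and easy to prove''), so there is nothing to compare against; your argument, with the determinant convention $(\a\wedge\b)(X,Y)=\a(X)\b(Y)-\a(Y)\b(X)$ under which $\iota_X(\e\wedge\g)=\e(X)\g-\g(X)\e$ holds, is a valid and standard verification that fills this gap.
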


\indent
Next, we use Lemma \ref{lem:codif-wedge} to characterise the suitable functions $w$ of Lemma \ref{lem:Ric_s_F0=0=F2}\,.

\begin{prop} \label{prop:first_3_of_Ric_s_F0=0=F2}
The first three equations of \eqref{e:Ric_s_F0=0=F2} can be, locally, solved, up to a gauge transformation,
if and only if there exists a function $w$ on $(N,h)$ such that the following two equations hold:\\
\begin{equation} \label{e:first_3_of_Ric_s_F0=0=F2}
\begin{split}
(\grad u)(w)=bu^3+aB^{\sharp}(&u)\,,\\
\bigl(2u^{-2}\D w-2au^{-2}\dif^*\!B+b\bigr)\dif\!u+2\bigl[u^{-2}\grad&\,u,\grad w-aB^{\sharp}\bigr]^{\flat}\\
&-4u^{-3}|\!\dif\!u|^2(\dif\!w-aB)=0\;.
\end{split}
\end{equation}
\end{prop}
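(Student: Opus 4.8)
The plan is to lean on the two lemmas already in place. Lemma~\ref{lem:Ric_s_F0=0=F2} reduces the local solvability (up to gauge) of the first three equations of \eqref{e:Ric_s_F0=0=F2} to the existence of a function $w$ satisfying its assertions (i) and (ii). Assertion (i) is, word for word, the first equation of \eqref{e:first_3_of_Ric_s_F0=0=F2}, so all that remains is to show that assertion (ii)---the closedness of
\[
\eta:=-2u^{-2}\,\dif\!u\times\dif\!w+bB+2au^{-2}\,\dif\!u\times B
\]
---is equivalent to the second equation of \eqref{e:first_3_of_Ric_s_F0=0=F2}. The whole proof is thus the translation of ``$\dif\!\eta=0$'' into that one displayed identity.

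Two reductions make this manageable. First, on an oriented Riemannian $3$-manifold one has $\dif^{*}(*\eta)=*\dif\!\eta$ for every one-form $\eta$, so $\eta$ is closed if and only if $\dif^{*}(*\eta)=0$; this replaces the exterior derivative by the codifferential, for which Lemma~\ref{lem:codif-wedge} is available. Second, since the cross product is the pointwise bilinear operation $\a\times\b=*(\a\wedge\b)$, I would absorb the singular factor into an exact form by writing $u^{-2}\dif\!u=-\dif\!(u^{-1})$, obtaining
\[
*\eta=b\,{*}B+2\,\dif\!(u^{-1})\wedge\dif\!w-2a\,\dif\!(u^{-1})\wedge B\,.
\]

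Now I apply $\dif^{*}$ term by term. The monopole equation gives $\dif\!B=*\dif\!u$, hence $\dif^{*}(*B)=\dif\!u$, which supplies the isolated $b\,\dif\!u$ term. To each of the two wedge terms I apply Lemma~\ref{lem:codif-wedge} with $\a=\dif\!(u^{-1})$; this requires $\dif^{*}\dif\!(u^{-1})=\D(u^{-1})$, which the product rule $\dif^{*}(f\o)=f\,\dif^{*}\o-\iota_{\grad\!f}\o$ together with $\D u=0$ evaluates to $-2u^{-3}|\!\dif\!u|^{2}$. Substituting $\dif\!(u^{-1})=-u^{-2}\dif\!u$ and $\grad(u^{-1})=-u^{-2}\grad u$ turns the three terms furnished by the lemma into a $\dif\!u$-coefficient, a Lie-bracket term of the shape $[u^{-2}\grad u,\,\cdot\,]^{\flat}$, and a $|\!\dif\!u|^{2}$-term. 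Collecting the $\dif\!w$- and $B$-contributions, and using bilinearity of the bracket in its second argument, reproduces exactly
\[
\bigl(2u^{-2}\D w-2au^{-2}\dif^{*}\!B+b\bigr)\dif\!u+2\bigl[u^{-2}\grad u,\grad w-aB^{\sharp}\bigr]^{\flat}-4u^{-3}|\!\dif\!u|^{2}(\dif\!w-aB)\,,
\]
whose vanishing is precisely $\dif^{*}(*\eta)=0$, i.e.\ assertion (ii).

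The delicate point is the bookkeeping of the powers of $u$. Grouping $u^{-2}$ inside the Lie bracket, rather than as an overall scalar, is forced by rewriting the singular coefficient as $\grad(u^{-1})$ before differentiating, and the cancellation producing the clean $\D(u^{-1})$-term uses harmonicity of $u$ in an essential way; keeping every sign straight in the interplay of $*$, $\dif^{*}$, the product rule and Lemma~\ref{lem:codif-wedge} is where the care lies. No geometric input beyond these two lemmas and the monopole equation is needed.
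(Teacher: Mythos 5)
Your proposal is correct and follows essentially the same route as the paper: both reduce via Lemma~\ref{lem:Ric_s_F0=0=F2} to the closedness of the one-form in its assertion (ii), convert that to a codifferential computation, and apply Lemma~\ref{lem:codif-wedge} together with $\dif^*(u^{-2}\dif\!u)=2u^{-3}|\!\dif\!u|^2$ (harmonicity of $u$) and $*\dif\!B=\dif\!u$. The only cosmetic difference is that the paper applies the codifferential lemma once to $(u^{-2}\dif\!u)\wedge(\dif\!w-aB)$ while you split into two applications with $\a=\dif(u^{-1})$; the bookkeeping is identical.
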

\begin{proof}
We have to show that the second relation of \eqref{e:first_3_of_Ric_s_F0=0=F2} holds if and only if
the one-form appearing in (ii) of Lemma \ref{lem:Ric_s_F0=0=F2} is closed; equivalently,
\begin{equation} \label{e:for_first_3_of_Ric_s_F0=0=F2}
*\dif\bigl(-2u^{-2}*(\dif\!u\wedge\dif\!w)+2au^{-2}*(\dif\!u\wedge B)+bB\bigr)=0\;.
\end{equation}
\indent
We calculate the left hand side of \eqref{e:for_first_3_of_Ric_s_F0=0=F2} by using that $*\dif\!B=\dif\!u$
and Lemma \ref{lem:codif-wedge}\,:
\begin{equation}
\begin{split}
-2\dif^*\!\bigl((u^{-2}\dif\!u)\wedge(\dif\!w-aB)\bigr)\,&+b\dif\!u\\
=\,-2\bigl(\dif^*(u^{-2}\dif\!u)\bigr)(\dif\!w-aB)\,&+2(\dif^*\!\dif\!w-a\dif^*\!B)\,u^{-2}\dif\!u\\
+2\bigl[u^{-2}&\grad u,\grad w-aB^{\sharp}\bigr]+b\dif\!u\;.
\end{split}
\end{equation}
\indent
Now, just note that, as $u$ is harmonic, we have $\dif^*(u^{-2}\dif\!u)=2u^{-3}|\!\dif\!u|^2$.
\end{proof}

\indent
Note that, the first two relations of \eqref{e:charact_w}\,, below, are tensorial in $\grad w$\,.

\begin{thm} \label{thm:charact_w}
Let $u$ be a harmonic function on a three-dimensional real-analytic Riemannian manifold $(N,h)$\,.
Let $B$ be a (local) solution of the monopole equation $\dif\!u=*\dif\!B$ and let $w$ be a function on $N$ satisfying
\begin{equation} \label{e:charact_w}
\begin{split}
(\grad w)(u)&=bu^3+aB^{\sharp}(u)\;,\\
\nabla_{\grad w}\dif\!u&=-u^{-1}|\!\dif\!u|^2\bigl(\dif\!w-aB\bigr)+\tfrac52\,bu^2\dif\!u-a\bigl[\grad u,B^{\sharp}\bigr]^{\flat}
+\tfrac12\,a\bigl(\Lie_{B^{\sharp}\!}h\bigr)(\grad u,\cdot)\;,\\
\nabla\!\dif\!w&=\tfrac12\,a\Lie_{B^{\sharp}\!}h-\tfrac12\,bu^2h+2u^{-1}\dif\!u\odot\bigl(\dif\!w-aB\bigr)\;,
\end{split}
\end{equation}
where $a,b\in\R$\,, $b\neq0$\,.\\
\indent
Then, locally on $N$, there exist functions $v$ and $f_0$\,, unique up to constants, such that
if $(M,g)$ is given by $(N,h)$\,, $u$ and $A=B+\dif\!v$\,, through the Gibbons--Hawking construction,
and $E=\bigl(f_0-at+\frac14\,b\,t^2\bigr)\frac{\partial}{\partial\,t}+tF$, where $F$ is the horizontal lift of
$u^{-2}\bigl(\grad w-aB^{\sharp}\bigr)$\,, then the following assertions are equivelent:\\
\indent
\quad{\rm (i)} $E$ is a soliton flow on $(M,g)$\,, with the corresponding constant $a$\,;\\
\indent
\quad{\rm (ii)} $\RicN+\,a\,u\,h-u^{-1}A\odot(\dif\!w-aB)=0$\,.\\
\indent
Moreover, any soliton flow on a real-analytic Riemannian manifold, given by the Gibbons--Hawking construction (with $u$ nonconstant), 
is obtained this way, if its horizontal part is $t$ times a nonzero basic vector field.
\end{thm}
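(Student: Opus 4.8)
The plan is to read Theorem \ref{thm:charact_w} as a translation of Corollary \ref{cor:Ric_s_F0=0=F2}\,. Since $E=\bigl(f_0-at+\frac14 bt^2\bigr)\frac{\partial}{\partial t}+tF$ is of the form $fV+t\widetilde{F_1}$ with $f=f_0-at+\frac14 bt^2$ and $F_1=u^{-2}\bigl(\grad w-aB^{\sharp}\bigr)$\,, that corollary reduces assertion (i) to the five relations \eqref{e:Ric_s_F0=0=F2} holding for $A=B+\dif\!v$\,. I would therefore match the three equations \eqref{e:charact_w} against these five relations, one group at a time.

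First I would dispatch the algebraically simple relations. As $u^2F_1^{\flat}=\dif\!w-aB$\,, the first equation of \eqref{e:charact_w} gives $F_1(u)=bu$\,, i.e.\ the first relation of \eqref{e:Ric_s_F0=0=F2}\,, and pins $b=u^{-1}F_1(u)$ to the prescribed constant. The third relation of \eqref{e:Ric_s_F0=0=F2} then reads $\dif\!f_0=-\dif(av+w)$\,, so it is automatically solvable and fixes $f_0$ up to a constant once $v$ is known. Expanding $\Lie_{F_1}\!h$ through $\Lie_{\psi X}h=\psi\,\Lie_X h+2\dif\!\psi\odot X^{\flat}$ and solving for the Hessian, the fourth relation $\Lie_{F_1}\!h=-b\,h$ is seen to be exactly the third equation of \eqref{e:charact_w}\,. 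The only relation still constraining $v$ is the second one of \eqref{e:Ric_s_F0=0=F2} (here $b\neq0$ is used), whose solvability is, by Lemma \ref{lem:Ric_s_F0=0=F2} and Proposition \ref{prop:first_3_of_Ric_s_F0=0=F2}\,, the second equation of \eqref{e:first_3_of_Ric_s_F0=0=F2}\,.

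The heart of the argument is to prove that, granted the first and third equations of \eqref{e:charact_w}\,, the second equation of \eqref{e:first_3_of_Ric_s_F0=0=F2} is equivalent to the second equation of \eqref{e:charact_w}\,. I would substitute the Hessian furnished by the third equation of \eqref{e:charact_w} into \eqref{e:first_3_of_Ric_s_F0=0=F2}\,: its trace gives $\D w=\dif^*\!\dif\!w=a\dif^*\!B-\frac12 bu^2$\,, so the whole first term $\bigl(2u^{-2}\D w-2au^{-2}\dif^*\!B+b\bigr)\dif\!u$ of \eqref{e:first_3_of_Ric_s_F0=0=F2} vanishes and all dependence on $\dif^*\!B$ disappears; next, using the first equation to evaluate $(\grad w-aB^{\sharp})(u^{-2})=-2b$ I would expand $\bigl[u^{-2}\grad u,\grad w-aB^{\sharp}\bigr]=u^{-2}\bigl[\grad u,\grad w-aB^{\sharp}\bigr]+2b\grad u$ and rewrite $\bigl[\grad u,\grad w\bigr]^{\flat}=(\nabla\!\dif\!w)(\grad u,\cdot)-\nabla_{\grad w}\dif\!u$\,; finally, evaluating $(\nabla\!\dif\!w)(\grad u,\cdot)$ again from the third equation and solving for $\nabla_{\grad w}\dif\!u$ returns precisely the second equation of \eqref{e:charact_w}\,. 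I expect this to be the main obstacle, the delicate point being the codifferential and trace sign conventions under which the $\dif^*\!B$-terms cancel.

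Once this dictionary is established the theorem follows. Given $w$ satisfying \eqref{e:charact_w}\,, the first and third equations supply relations one and four, the second equation supplies solvability of relation two and hence $v$ (unique up to a constant), and relation three then supplies $f_0$ (unique up to a constant); since $u^{-1}A\odot(\dif\!w-aB)=uA\odot F_1^{\flat}$\,, the fifth relation of \eqref{e:Ric_s_F0=0=F2} is exactly assertion (ii), so by Corollary \ref{cor:Ric_s_F0=0=F2} condition (i) holds if and only if (ii) does. For the concluding \emph{moreover} assertion I would run the dictionary backwards: a soliton flow with horizontal part $t\widetilde{F_1}$\,, $F_1$ basic and nonzero, satisfies, by Corollary \ref{cor:Ric_s_F0=0=F2}\,, all five relations with $b=u^{-1}F_1(u)\neq0$\,; taking $B=A$ (so $v=0$) and defining $w$ by $\dif\!w=u^2F_1^{\flat}+aA$\,, which is exact (indeed equal to $-\dif\!f_0$) by relation three, one checks that $F_1=u^{-2}\bigl(\grad w-aB^{\sharp}\bigr)$ and that $w$ satisfies \eqref{e:charact_w}\,, so that $E$ arises in the asserted form.
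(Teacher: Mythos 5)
Your proposal is correct and follows essentially the same route as the paper: reduce to Corollary \ref{cor:Ric_s_F0=0=F2}\,, identify the fourth relation $\Lie_{F_1}h=-bh$ with the Hessian equation (third of \eqref{e:charact_w}\,), and show that, modulo the first and third equations of \eqref{e:charact_w}\,, the closedness condition of Proposition \ref{prop:first_3_of_Ric_s_F0=0=F2} becomes the second equation of \eqref{e:charact_w}\,. The only cosmetic difference is that the paper packages the divergence identity as $\dif^*G=-\tfrac12 bu^2$ for $G=\grad w-aB^{\sharp}$ where you take the trace of the Hessian equation to get $\D w=a\dif^*\!B-\tfrac12 bu^2$ --- the same fact --- and your explicit reverse construction ($B=A$\,, $w=-f_0$ up to a constant) for the final assertion is a welcome elaboration of what the paper leaves implicit.
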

\begin{proof}
If we denote $G=\grad w-aB^{\sharp}$ then $a\dif\!u=-*\dif\!G^{\flat}$\,,
and \eqref{e:first_3_of_Ric_s_F0=0=F2} is equivalent to
\begin{equation} \label{e:for_first_3_of_Ric_s_F0=0=F2_bis}
\begin{split}
G(u)&=bu^3\\
2\bigl[u^{-2}\grad u,G\bigr]&=-\bigl(2u^{-2}\dif^*\!G+b\bigr)\grad u+4u^{-3}|\!\dif\!u|^2G\;.
\end{split}
\end{equation}
\indent
Further, as $F_1=F=u^{-2}G$\,, the fourth relation of \eqref{e:Ric_s_F0=0=F2} holds if and only if
\begin{equation} \label{e:fourth_of_Ric_s_F0=0=F2_with_G}
\Lie_{G\!}h=-bu^2h+4u^{-1}\dif\!u\odot G^{\flat}\;.
\end{equation}
\indent
Then \eqref{e:fourth_of_Ric_s_F0=0=F2_with_G} and the first relation of \eqref{e:for_first_3_of_Ric_s_F0=0=F2_bis}
imply $\dif\!^*G=-\frac{bu^2}{2}$ (which, together with the second relation of \eqref{e:for_first_3_of_Ric_s_F0=0=F2_bis}\,,
gives $G\bigl(|\!\dif\!u|^2\bigr)=3\,b\,u^2|\!\dif\!u|^2$\,).\\
\indent
Thus, if \eqref{e:fourth_of_Ric_s_F0=0=F2_with_G} holds then \eqref{e:for_first_3_of_Ric_s_F0=0=F2_bis} is equivalent to
the first relation of \eqref{e:charact_w} and the following
$\bigl[\grad u,G\bigr]=2u^{-1}|\!\dif\!u|^2G-2bu^2\grad u\,;$
further, the latter is equivalent to
\begin{equation} \label{e:charact_w_1}
\nabla_{\grad u}(\grad w)=\nabla_{\grad w}(\grad u)+a\bigl[\grad u,B^{\sharp}\bigr]+2u^{-1}|\!\dif\!u|^2\bigl(\grad w-aB^{\sharp}\bigr)
-2bu^2\grad u\;.
\end{equation}
\indent
On the other hand, \eqref{e:fourth_of_Ric_s_F0=0=F2_with_G} is equivalent to
\begin{equation} \label{e:charact_w_2}
\nabla\!\dif\!w=\tfrac12\,a\Lie_{B^{\sharp}\!}h-\tfrac12\,bu^2h+2u^{-1}\dif\!u\odot\bigl(\dif\!w-aB\bigr)\;;
\end{equation}
which, together with $G(u)=bu^3$, implies
\begin{equation} \label{e:charact_w_3}
\nabla_{\grad u}(\dif\!w)=\tfrac12\,a\bigl(\Lie_{B^{\sharp}\!}h\bigr)\bigl(\grad u,\cdot\bigr)
+\tfrac12\,bu^2\dif\!u+u^{-1}|\!\dif\!u|^2\bigl(\dif\!w-aB\bigr)\;.
\end{equation}
\indent
Thus, if \eqref{e:charact_w_2} holds then \eqref{e:charact_w_1} is equivalent to
the second relation of \eqref{e:charact_w}\,.\\
\indent
We have, thus, shown that the first four relations of \eqref{e:Ric_s_F0=0=F2} can be locally solved, with a suitable $A=B+\dif\!v$ and $f_0$\,,
if and only if \eqref{e:charact_w} holds. Together with Corollary \ref{cor:Ric_s_F0=0=F2}\,, this completes the proof.
\end{proof}

\indent 
We end this section with the following application of Theorem \ref{thm:charact_w}\,. We omit the proof. 

\begin{cor} \label{cor:Ric_s_F0=0=F2_u_particular}
Let $(M,g)$ be given by the Gibbons--Hawking construction, with $(N,h)$ real-analytic, and the fibres of $u$ are flat and geodesic.\\ 
\indent 
Let $E=fV+t\widetilde{F}$ be a real-analytic vector field on $(M,g)$\,, where $\widetilde{F}$ is the horizontal lift
of the vector field $F$ on $N$.\\
\indent
Then $E$ is a soliton flow on $(M,g)$\,, with the corresponding constant $a$\,, if and only if $F=0$\,, 
$a=0$\,, $f_0$ is constant, and $(M,g)$ is Ricci flat self-dual. 
\end{cor}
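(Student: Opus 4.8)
The plan is to reduce the whole statement to the single assertion that the constant $b$ of Corollary~\ref{cor:Ric_s_F0=0=F2} vanishes. By the discussion following that corollary (which needs only $(N,h)$ real-analytic and $F_0=F_2=0$\,, exactly our situation with $F_1=F$), the field $E=fV+t\widetilde{F}$ is a soliton flow with constant $a$ if and only if $f=f_0-at+\tfrac14\,b\,t^2$ and the five relations \eqref{e:Ric_s_F0=0=F2} hold; moreover $b=0$ already forces $F=0$\,, $a=0$\,, $f_0$ constant and $(M,g)$ Ricci flat self-dual. This settles the ``if'' implication at once: when $F=0$\,, $a=0$ and $f_0$ is constant we have $E=f_0V$\,, so that $\Lie_E g=f_0\,\Lie_V g=0$ since $V$ is Killing, while $\RicM=0$ by Ricci flatness, whence \eqref{e:Ric_s} holds with $a=0$\,. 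Thus it only remains to show that a soliton flow of the given form necessarily has $b=0$\,.

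I would argue by contradiction, assuming $b\neq0$\,, and bring in the hypothesis on the fibres of $u$\,. Let $Z=|\!\dif\!u|^{-1}\grad u$ be the unit normal to the fibres and let $R$ denote the curvature tensor of $(N,h)$\,. Since the fibres are totally geodesic, their second fundamental form vanishes, so the Codazzi equation collapses to $h(R(S,S')S'',Z)=0$ for all $S,S',S''$ tangent to a fibre. Contracting over an orthonormal frame of the fibre then yields the key relation
\[
\RicN(X,Z)=0\quad\text{for every $X$ tangent to a fibre of $u$.}
\]
The flatness of the fibres is not needed for this particular deduction --- it would enter only through the Gauss equation, recording that the intrinsic curvature equals the ambient sectional curvature of the fibre plane --- so only the totally geodesic property is used here.

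Next I would read off the same mixed Ricci component from the soliton relations. Because $b\neq0$\,, the second relation of \eqref{e:Ric_s_F0=0=F2}\,, $bA=2\,\dif\!u\times F_1^{\flat}$\,, shows that $A$ is orthogonal to $\grad u$\,, hence tangent to the fibres, so $A(Z)=0$\,; and the first relation gives $F_1^{\flat}(Z)=|\!\dif\!u|^{-1}F_1(u)=b\,u\,|\!\dif\!u|^{-1}$\,. Evaluating the fifth relation $\RicN=-a\,u\,h+u\,A\odot F_1^{\flat}$ on a fibre-tangent $X$ and the normal $Z$\,, the term in $h$ drops out and only the $A(X)F_1^{\flat}(Z)$ part of the symmetric product survives, giving $\RicN(X,Z)=\tfrac12\,b\,u^2\,|\!\dif\!u|^{-1}A(X)$\,. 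Comparing with the vanishing obtained from Codazzi, and using that $A$ is itself tangent to the fibres, I conclude $b\,A=0$\,; as $b\neq0$ this forces $A=0$\,, contradicting the standing hypothesis $\O=\dif\!A\neq0$\,. Hence $b=0$\,, and the remark after Corollary~\ref{cor:Ric_s_F0=0=F2} delivers the full conclusion.

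The routine part is the algebra with \eqref{e:Ric_s_F0=0=F2}\,; the one genuinely geometric input, and the heart of the argument, is the vanishing $\RicN(X,Z)=0$ extracted from the Codazzi equation for the totally geodesic fibres. The two points to handle with care are that ``geodesic fibres'' must be read in the totally geodesic sense (so that Codazzi applies), and that $A$ is tangent to the fibres (forced by $b\neq0$ through the second relation), so that the final comparison is between a scalar multiple of the single one-form $A$ and zero, rather than between full symmetric tensors. Once these are in place, the contradiction with $\O\neq0$ is immediate.
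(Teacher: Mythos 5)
Your argument is correct, and since the paper omits its own proof of this corollary (offering it only as ``an application of Theorem \ref{thm:charact_w}''), your route is necessarily different from the intended one: you bypass Lemma \ref{lem:Ric_s_F0=0=F2}, Proposition \ref{prop:first_3_of_Ric_s_F0=0=F2} and Theorem \ref{thm:charact_w} altogether and argue directly from Corollary \ref{cor:Ric_s_F0=0=F2} plus one piece of classical submanifold geometry. The individual steps check against the paper's conventions: $\times$ is $*(\cdot\wedge\cdot)$ (as the derivation of \eqref{e:Ric_s_Gib-Haw_345} confirms), so $b\neq0$ and the second relation of \eqref{e:Ric_s_F0=0=F2} do give $A(\grad u)=0$, which the paper itself records; the symmetric product satisfies $(\a\odot\b)(S,S)=\a(S)\b(S)$, so evaluating the fifth relation on a fibre-tangent $X$ and the unit normal $Z$ gives precisely $\RicN(X,Z)=\tfrac12\,b\,u^2|\!\dif\!u|^{-1}A(X)$; and Codazzi for totally geodesic level sets kills this mixed Ricci component, forcing $A=0$, hence $\dif\!u=*\dif\!A=0$ and $b=u^{-1}F_1(u)=0$, contradicting $b\neq0$ (equivalently, contradicting the standing assumption $\O\neq0$). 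Two remarks. First, as you observe, you never use the flatness of the fibres, only that they are totally geodesic, so you in fact prove a slightly stronger statement; this is not a flaw, but it is worth saying explicitly, and it suggests the authors' intended argument (presumably through the Hessian equations \eqref{e:charact_w}, where both $\nabla\dif\!u$ and the intrinsic curvature of the level sets would enter) is organised quite differently. Second, everything hinges on reading ``geodesic'' as ``totally geodesic''; you flag this, and it is the standard reading for level surfaces of a function on a $3$-manifold, so I see no gap.
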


\section{Ricci solitons and the Beltrami fields construction} \label{sec:Beltrami}

\indent
Let $\phi:(M,g)\to(N,h)$ be a twistorial harmonic morphism with $V(\l^{-2})\neq0$\,. Then, up to homotheties, we
may suppose that $V(\l^{-2})=2$\,. It follows that, locally, there exists a one-form $A$ on $N$
satisfiying the Beltrami fields equation $\dif\!A+2*A=0$ and such that $$g=\r^2h+\r^{-2}(\r\dif\!\r+A)^2\;,$$
with $\phi:M=N\times(0,\infty)\to N$ the projection; in particular, $\l=\r^{-1}$, $V=\r^{-1}\frac{\partial}{\partial\r}$\,,
and $\theta=\r\dif\!\r+A$\,.\\
\indent
Note that, if $(N,h)$ is real-analytic then, as $A$ satisfies $\D A=4A$ (here, $\D$ is the Hodge-Laplace operator),
we have that, also, $g$ is real-analytic. Furthermore, the complexification
of $g$ is defined on $N^{\C}\times(\C\setminus\{0\})$.

\begin{thm} \label{thm:Ric_s_thm_Beltrami}
Let $E$ be a real-analytic soliton flow on a Riemannian manifold $(M,g)$ given by the Beltrami fields construction.\\
\indent
Suppose that $E$ admits a complexification on a set containing $U\times\g$\,, where $U$ is an open subset of $N$
and $\g$ is a circle on $\C\!$, centred at $0$\,.\\
\indent
Then the following assertions are equivalent:\\
\indent
\quad{\rm (i)} $(M,g)$ is Einstein;\\
\indent
\quad{\rm (ii)} $\int_{\g}\r\,(\Lie_E\!h)^{\C\!}\dif\!\r=0$\,, on $U^{\C}$.\\
\indent
Furthermore, a sufficient condition for {\rm (i)} and {\rm (ii)} to hold is that there exists $j\in\mathbb{N}\setminus\{0\}$
such that the trace-free part of $\int_{\g}\r^{2j+1}\,(\Lie_E\!h)^{\C\!}\dif\!\r$ is zero, on $U^{\C}$.
\end{thm}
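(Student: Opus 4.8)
The plan is to read both contour integrals as residues and then to feed in the five soliton relations of Proposition~\ref{prop:Ric_s_thm}, specialised to the Beltrami gauge. First I would record that here $c=2$, $\l=\r^{-1}$, $V=\r^{-1}\tfrac{\partial}{\partial\r}$, $\theta=\r\dif\!\r+A$ and $\O=\dif\!A=-2*\!A$, so that $\iota_Z\O=0$ and $\O(X,Y)$ is the only nonvanishing component of $\O$. After complexifying, $(\Lie_E\!h)^{\C}$ is a symmetric $2$-tensor on $U^{\C}$ depending holomorphically on $\r$ in an annulus around $\g$; writing its Laurent expansion $(\Lie_E\!h)^{\C}=\sum_k\r^k\,T_k$, with the $T_k$ symmetric $2$-tensors on $U^{\C}$, Cauchy's theorem gives $\int_{\g}\r^{2j+1}(\Lie_E\!h)^{\C}\dif\!\r=2\p i\,T_{-2j-2}$. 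Thus (ii) is the vanishing of $T_{-2}$, and the stated sufficient condition is the vanishing of the trace-free part of $T_{-2j-2}$ for some $j\geq1$.

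For the equivalence (i)$\Leftrightarrow$(ii) I would compute $T_{-2}$ explicitly. Combining \eqref{e:2_for_(5)} with the fifth relation of \eqref{e:Ric_s_thm} expresses the part of $(\Lie_E\!h)^{\C}$ seen in the horizontal frame as
\[
\r^{-2}\bigl(4-2f-z\,\O(X,Y)\bigr)\,h-2a\,h-2\r^{-2}\RicN\,,
\]
so the only sources of a $\r^{-2}$-term are the constant $4$ and $\RicN$ (the latter, being $\r$-independent, sits in degree $\r^{-2}$ alone). The passage from this horizontal frame to the coordinate frame on $N$ adds a correction built from $A$ and from the mixed components controlled by the second, third and fourth relations of \eqref{e:Ric_s_thm}, and one must check that this correction does not disturb the $\r^{-2}$-coefficient. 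Meanwhile the first relation of \eqref{e:Ric_s_thm} gives $2f+z\,\O(X,Y)=2\r\,\tfrac{\partial f}{\partial\r}+2a\r^2$, whose $\r^{0}$-coefficient is zero. Hence $T_{-2}=4h-2\RicN$, i.e. $\int_{\g}\r\,(\Lie_E\!h)^{\C}\dif\!\r=4\p i\,(2h-\RicN)$ on $U^{\C}$. This vanishes exactly when $\RicN=2h$, that is, when $(N,h)$ has constant sectional curvature $1=\tfrac{c^2}{4}$ (in dimension three Einstein and constant curvature coincide); by the facts recalled in Section~2 this is precisely the condition that $(M,g)$ be Einstein. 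Real-analyticity and unique continuation then promote $\RicN=2h$ from $U$ to all of $N$, making the local statement on $U^{\C}$ equivalent to (i).

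For the sufficient condition the key observation is that, for $j\geq1$, the tensor displayed above contributes to $T_{-2j-2}$ only a multiple of $h$, since its $\RicN$-term lives in degree $\r^{-2}$; consequently the entire trace-free part of $T_{-2j-2}$ is carried by the frame-correction term. I would then unwind the recursion built into the first four relations of \eqref{e:Ric_s_thm} — relation one expressing each Laurent coefficient of $f$ through that of $z$, and relation four linking the coefficient of index $k$ to that of index $k-2$ — to show that the trace-free part of this deep coefficient is a nonzero multiple of the trace-free part of $\RicN$. Its vanishing forces $(N,h)$ to be Einstein, hence of constant sectional curvature; pinning the value of that constant to $\tfrac{c^2}{4}$, equivalently propagating the trace-free vanishing up the recursion until it forces $T_{-2}$ itself to vanish, yields (i), after which (ii) follows from the equivalence already established.

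The main obstacle is precisely this interplay between the frame-correction and the recursion. Concretely, I expect the hard work to be (a) verifying, in the equivalence part, that the correction from the coordinate frame contributes nothing to the $\r^{-2}$-coefficient, and (b) identifying, in the sufficient-condition part, the trace-free part of the deep coefficient $T_{-2j-2}$ with a nonzero multiple of the trace-free part of $\RicN$ and tracing the vanishing back up to $T_{-2}$. Everything else reduces to the residue bookkeeping and to the algebra of the five soliton relations of Proposition~\ref{prop:Ric_s_thm}.
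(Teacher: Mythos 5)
Your treatment of the equivalence (i) $\Leftrightarrow$ (ii) is essentially the paper's argument. The paper first converts the five relations of Proposition \ref{prop:Ric_s_thm} into a system for the Laurent coefficients $f_j\,,F_j$ (Corollary \ref{cor:Ric_s_Beltrami_Laurent}); the $\r^0$-coefficient of the fifth relation is $\Lie_{F_{-2}}h+2\RicN+2(f_0-z_0|A|-2)h=0$\,, and the first relation gives $f_0=z_0|A|$\,, so the residue you call $T_{-2}$ is indeed $4h-2\RicN$. Your worry (a) is harmless: the only frame correction entering that index is $-2jA\odot F_j^{\flat}$ with $j=0$\,, which vanishes. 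Up to this point the proposal is correct and follows the same route, merely working with the generating function rather than coefficient by coefficient.

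The gap is in the ``furthermore'' part. Your central claim --- that the trace-free part of $T_{-2j-2}$ is a nonzero multiple of the trace-free part of $\RicN$ --- is false: $\RicN$ enters the Laurent system \eqref{e:Ric_s_Beltrami_Laurent} only in the single equation at index $0$\,, and the deep coefficients carry no direct trace of it. What actually happens (this is \eqref{e:6_of_Ric_s_Beltrami_Laurent} in the paper) is that, for $m\notin\{0,2\}$\,, the sixth Laurent relation expresses $\Lie_{F_{m-2}}h$ as $2mA\odot F_m^{\flat}$ plus a multiple of $h$\,; since $A=|A|Z^{\flat}$ with $|A|\neq0$ and $m\neq0$\,, the trace-free part of this expression vanishes if and only if $x_m=y_m=z_m=0$\,, i.e.\ $F_m=0$ (and then $f_m=0$ by the first Laurent relation). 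Taking $m=-2j$\,, your hypothesis kills $F_{-2j}$\,; applying the same identity at the indices $-2j+2\,,\,-2j+4\,,\dots,-2$ propagates $F_m=0$ up the chain of even negative indices, and only when the chain reaches index $0$ does the seventh relation deliver $\RicN=2h$ --- Einstein constant included, in one step, with no separate ``pin the constant'' stage. This mechanism also explains why the statement involves precisely the odd powers $\r^{2j+1}$: the induction must terminate on the index where $\RicN$ sits, and an odd-index chain would never reach it. None of this --- the explicit identification of the trace-free part as an algebraic expression in $F_{-2j}$\,, the fact that its vanishing forces $F_{-2j}=0$\,, and the upward induction through even indices --- appears in your proposal, and the claim you substitute for it would not survive the computation.
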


\indent
The proof of Theorem \ref{thm:Ric_s_thm_Beltrami} will be given below.\\
\indent
From now on, in this section, \emph{we shall denote by $X$, $Y$ and $Z$ the projections onto $N$ of the corresponding
vector fields appearing in \eqref{e:Ric_s_thm}\,.} Then, as $\O=\dif\!A$\,, we have $Z=\frac{1}{|A|}A^{\sharp}$, $A(X)=A(Y)=0$\,.
Hence, the horizontal lifts of $X$, $Y$ and $Z$ are $X$, $Y$ and $-|A|\r^{-1}\frac{\partial}{\partial\r}+Z$, respectively.
Consequently, $\O(X,Y)=\bigl(-|A|\r^{-1}\frac{\partial}{\partial\r}+Z\bigr)(\r^2)=-2|A|$\,.\\
\indent
Thus, Proposition \ref{prop:Ric_s_thm} gives the following result.

\begin{cor} 
Let $(M,g)$ be given by the Beltrami fields construction. Then a vector field $E=fV+F$, with $F$ horizontal,
is a soliton flow on $(M,g)$\,, with the corresponding constant $a$\,, if and only if the following five relations hold:\\
\begin{equation} \label{e:Ric_s_thm_Beltrami}
\begin{split}
&\r\,\frac{\partial f}{\partial\r}+a\r^2-f+z|A|=0\;,\\
&X(f)+2y|A|+\r^3\,\frac{\partial x}{\partial\r}=0\;,\\
&Y(f)-2x|A|+\r^3\,\frac{\partial y}{\partial\r}=0\;,\\
&-|A|\r^{-1}\frac{\partial f}{\partial\r}+Z(f)+\r^3\,\frac{\partial z}{\partial \r}=0\;,\\
&\r^2\Lie_{F\!}h+2\RicN+2\bigl(f-z|A|+a\r^2-2\bigr)h=0\,\;{\rm on}\;\H\,.
\end{split}
\end{equation}
\end{cor}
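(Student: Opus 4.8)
The plan is to obtain this corollary as a direct specialisation of Proposition \ref{prop:Ric_s_thm} to the Beltrami fields construction, in exactly the same way that Corollary \ref{cor:Ric_s_thm_Gib-Haw} was deduced in the Gibbons--Hawking case. Thus the whole task reduces to substituting the Beltrami data into the five relations \eqref{e:Ric_s_thm} and clearing the powers of $\r$.

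First I would record the relevant quantities. The normalisation $V(\l^{-2})=2$ gives $c=2$; from $\l=\r^{-1}$ we get $\l^2=\r^{-2}$ and $\l^{-4}=\r^4$, so that $\l^{-4}V=\r^3\frac{\partial}{\partial\r}$ since $V=\r^{-1}\frac{\partial}{\partial\r}$; and, as computed immediately before the statement, $\O(X,Y)=-2|A|$. The one point requiring care is the change of convention for the horizontal frame: in \eqref{e:Ric_s_thm} the symbols $X,Y,Z$ denote a horizontal frame on $M$, whereas here they denote the projected frame on $N$. Since $A(X)=A(Y)=0$, the horizontal lifts of $X$ and $Y$ are again $X$ and $Y$, but the horizontal lift of $Z$ is $\widetilde{Z}=-|A|\r^{-1}\frac{\partial}{\partial\r}+Z$; hence the term $Z(f)$ in the fourth relation of \eqref{e:Ric_s_thm} must be read as $\widetilde{Z}(f)=-|A|\r^{-1}\frac{\partial f}{\partial\r}+Z(f)$, which is precisely the source of the first summand of the fourth relation of \eqref{e:Ric_s_thm_Beltrami}.

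With these values in hand the verification is mechanical. Substituting $c=2$, $\O(X,Y)=-2|A|$ and $\l^2=\r^{-2}$ into the first relation of \eqref{e:Ric_s_thm} and multiplying through by $\r^2$ yields the first relation of \eqref{e:Ric_s_thm_Beltrami}; using $\l^{-4}V=\r^3\frac{\partial}{\partial\r}$ together with the horizontal lifts above, the second, third and fourth relations follow directly; and the fifth follows after multiplying by $\r^2$ and identifying $\Lie_F(\phi^*h)$, $\phi^*(\RicN)$ and $\phi^*h$ with $\Lie_F h$, $\RicN$ and $h$ on $\H$ via the bundle isomorphism $\H=\phi^*(TN)$. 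I anticipate no genuine obstacle: the only non-formal inputs, namely the values of $c$ and $\O(X,Y)$ and the horizontal lift $\widetilde{Z}$, are already established, and everything else is bookkeeping. The single subtlety to guard against is the altered meaning of $X,Y,Z$, which is exactly what breaks the symmetry between the fourth relation (carrying the extra $-|A|\r^{-1}\frac{\partial f}{\partial\r}$ term) and the second and third.
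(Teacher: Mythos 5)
Your proposal is correct and follows exactly the route the paper takes: the paper simply states that the corollary follows from Proposition \ref{prop:Ric_s_thm} via the substitutions $c=2$, $\l=\r^{-1}$, $\O(X,Y)=-2|A|$ and the horizontal lifts recorded just before the statement, which is precisely your computation. Your handling of the frame-convention change (the extra $-|A|\r^{-1}\partial f/\partial\r$ term in the fourth relation coming from the lift of $Z$) matches the paper's setup.
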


\indent
We shall, now, rewrite \eqref{e:Ric_s_thm_Beltrami} under the hypothesis of Theorem \ref{thm:Ric_s_thm_Beltrami}\,.

\begin{cor} \label{cor:Ric_s_Beltrami_Laurent}
Let $(M,g)$ be given by the Beltrami fields construction, with $(N,h)$ real-analytic.
Let $E=fV+F$ be a real-analytic vector field on $(M,g)$\,, where $F$ is horizontal.\\
\indent
Suppose that $E$ admits a complexification on a set containg $U\times\g$\,, where $U$ is an open subset of $N$
and $\g$ is a circle on $\C\!$, centred at $0$\,.\\
\indent
Then $f$ and $F$ admit Laurent series expansions
$f=\sum_{j=-\infty}^{\infty}\r^jf_j$ and $F=\sum_{j=-\infty}^{\infty}\r^j\widetilde{F_j}$, where $f_j$ are functions on $N$,
and $\widetilde{F_j}$ are the horizontal lifts of vector fields $F_j=x_jX+y_jY+z_jZ$ on $N$\,.\\
\indent
Furthermore, $E$ is a soliton flow on $(M,g)$\,, with the corresponding constant $a$\,, if and only if the following relations hold:\\
\begin{equation} \label{e:Ric_s_Beltrami_Laurent}
\begin{split}
&(j-1)f_j+z_j|A|=0\;,\;\bigl(j\in\mathbb{Z}\setminus\{2\}\bigr)\;,\\
&f_2+z_2|A|+a=0\;,\\
&X(f_j)+2y_j|A|+(j-2)x_{j-2}=0\;,\;\bigl(j\in\mathbb{Z}\bigr)\;,\\
&Y(f_j)-2x_j|A|+(j-2)y_{j-2}=0\;,\;\bigl(j\in\mathbb{Z}\bigr)\;,\\
&-(j+2)|A|f_{j+2}+Z(f_j)+(j-2)z_{j-2}=0\;,\;\bigl(j\in\mathbb{Z}\bigr)\;,\\
&\Lie_{F_{j-2}}h-2jA\odot F_j^{\flat}+2\bigl(f_j-z_j|A|\bigr)h=0\;,\;\bigl(j\in\mathbb{Z}\setminus\{0\,,\,2\}\bigr)\;,\\
&\Lie_{F_{-2}}h+2\RicN+2\bigl(f_0-z_0|A|-2\bigr)h=0\;,\\
&\Lie_{F_0}h-4A\odot F_2^{\flat}+2\bigl(f_2-z_2|A|+a\bigr)h=0\;,\\
\end{split}
\end{equation}
where $\odot$ denotes he symmetric product.
\end{cor}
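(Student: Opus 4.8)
The plan is to derive \eqref{e:Ric_s_Beltrami_Laurent} from the previous corollary's equations \eqref{e:Ric_s_thm_Beltrami} by substituting the Laurent series and comparing coefficients of each power $\r^j$. First I would justify the existence of the Laurent expansions: since $E$ is real-analytic and, by hypothesis, admits a complexification on a set containing $U\times\g$ with $\g$ a circle centred at $0$, the functions $f$ and the components of $F$ are holomorphic in $\r$ on an annulus about $0$, hence expand in Laurent series $f=\sum_j\r^jf_j$ and $F=\sum_j\r^j\widetilde{F_j}$ with coefficients $f_j$, $F_j=x_jX+y_jY+z_jZ$ on $N$. This reduces each of the five relations of \eqref{e:Ric_s_thm_Beltrami} to an identity between Laurent series in $\r$, which holds if and only if it holds coefficient-by-coefficient.

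The core of the proof is then purely bookkeeping in the exponent $j$. For the first relation of \eqref{e:Ric_s_thm_Beltrami}, I would note $\r\,\partial f/\partial\r=\sum_j j\r^jf_j$, so the term $\r\,\partial f/\partial\r-f$ contributes $\sum_j(j-1)\r^jf_j$; together with $z|A|=\sum_j\r^j z_j|A|$ and the lone term $a\r^2$ (which only affects the coefficient of $\r^2$), matching coefficients gives the first two relations of \eqref{e:Ric_s_Beltrami_Laurent}, the exceptional case $j=2$ absorbing the $a\r^2$ term. For the second and third relations, the key observation is that $\r^3\,\partial x/\partial\r=\sum_j j\r^{j+2}x_j=\sum_j(j-2)\r^j x_{j-2}$, which produces the index shift $x_{j-2}$; this yields the third and fourth relations of \eqref{e:Ric_s_Beltrami_Laurent}. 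The fourth relation of \eqref{e:Ric_s_thm_Beltrami} combines the shifted term $\r^3\,\partial z/\partial\r$ (giving $(j-2)z_{j-2}$) with $-|A|\r^{-1}\,\partial f/\partial\r=-|A|\sum_j(j+2)\r^{j+1}f_{j+2}\cdot\r^{-2}$, after re-indexing giving $-(j+2)|A|f_{j+2}$, and $Z(f)$; matching yields the fifth relation.

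The last relation of \eqref{e:Ric_s_thm_Beltrami}, the tensorial equation on $\H$, requires the most care. Here I would use that $F=\sum_j\r^j\widetilde{F_j}$ is a sum of horizontally-lifted vector fields, and that the horizontal lift of $S$ is $-A(S)\r^{-1}\,\partial/\partial\r+S$ on the $N$-part, so $\Lie_{\r^j\widetilde{F_j}}h$ restricted to $\H$ picks up two contributions analogous to the Gibbons--Hawking computation in Corollary \ref{cor:Ric_s_Gib-Haw}: a term $\r^j\Lie_{F_j}h$ from differentiating along $N$, and a term from $\widetilde{F_j}(\r^j)$-type factors producing the $A\odot F_j^{\flat}$ coupling. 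Combining these with the factor $\r^2$ in front, with $2\RicN$, and with the scalar coefficient $2(f-z|A|+a\r^2-2)h$, then matching the coefficient of $\r^j$, produces the last three relations of \eqref{e:Ric_s_Beltrami_Laurent}, the three exceptional cases $j=0$ and $j=2$ isolating the $\RicN$ and $a$ contributions respectively. The main obstacle I anticipate is getting every index shift and every exceptional case exactly right simultaneously, in particular tracking how the $\r^2$-shift in the tensorial equation interacts with the constant curvature terms $2\RicN$ and $-4$ to single out $j=0$ and $j=2$; once the shifts are pinned down the identification of coefficients is routine.
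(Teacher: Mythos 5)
Your proposal is correct and follows essentially the same route as the paper: existence of the Laurent expansion from holomorphy on an annulus containing $U\times\g$, coefficient-by-coefficient matching in $\r$ for the first four equations of \eqref{e:Ric_s_thm_Beltrami}, and the identity $(\Lie_{\r^j\widetilde{F_j}}h)(\widetilde{S},\widetilde{S})=\r^j(\Lie_{F_j}h)(S,S)-2j\r^{j-2}\bigl(A\odot F_j^{\flat}\bigr)(S,S)$ for the tensorial equation. (Only a typographical slip: in the fourth equation the intermediate factor should be $\r^{-1}$, not $\r^{-2}$; your stated coefficient $-(j+2)|A|f_{j+2}$ is nevertheless correct.)
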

\begin{proof}
We shall denote by the same symbol an object and its complexification. Also, all the objects are assumed complex-analytic.\\
\indent
As the domain (of the complexification) of $E$ contains $U$ times a circle, with $U\subseteq N$ open, it, also, contains $U$ times
an open annulus. Then the existence of the Laurent series expansions, for $f$, $x$\,, $y$\,, $z$\,, and, consequently, $F$,
follows by applying a standard argument.\\
\indent
It is easy to see that the first four relations of \eqref{e:Ric_s_thm_Beltrami} are equivalent to the first five relations
of \eqref{e:Ric_s_Beltrami_Laurent}\,.\\
\indent
Let $S$ be a vector field on $N$ and let $\widetilde{S}$ be its horizontal lift. As
$\widetilde{S}=-A(S)\r^{-1}\frac{\partial}{\partial\r}+S$\,, we have $\widetilde{S}(\r^j)=-jA(S)\r^{j-2}$.
Consequently,
$$(\Lie_{\r^j\widetilde{F_j}}h)(\widetilde{S},\widetilde{S})=\r^j(\Lie_{F_j}h)(S,S)-2j\r^{j-2}\bigl(A\odot F_j^{\flat}\bigr)(S,S)\;,$$
from which the last three relations follow quickly.
\end{proof}

\indent
We can, now, give the proof of Theorem \ref{thm:Ric_s_thm_Beltrami}\,.

\begin{proof}[Proof of Theorem \ref{thm:Ric_s_thm_Beltrami}]
The equivalence of assertions (i) and (ii) is a consequence of the seventh formula of \eqref{e:Ric_s_Beltrami_Laurent}\,.\\
\indent
On working with homogeneous quadratic polynomials, instead of symmetric bilinear forms,
and identifying forms and vector fields on $N$, through $h$\,,
the sixth formula of \eqref{e:Ric_s_Beltrami_Laurent} is equivalent to the following:\\
\begin{equation} \label{e:6_of_Ric_s_Beltrami_Laurent}
\begin{split}
-\tfrac12\Lie_{F_{j-2}}h=(f_j-z&_j|A|)X^2+(f_j-z_j|A|)Y^2+(f_j-2z_j|A|)Z^2\\
&-jx_j|A|XZ-jy_j|A|YZ\;.
\end{split}
\end{equation}
\indent
The last assertion follows quickly from the first formula of \eqref{e:Ric_s_Beltrami_Laurent} and
\eqref{e:6_of_Ric_s_Beltrami_Laurent}\,.
\end{proof}

\begin{rem}
Under the same hypotheses as in Theorem \ref{thm:Ric_s_thm_Beltrami}\,, and with similar proofs, the following two statements
follow quickly:\\
\indent
(a) The vertical part of $E$ is determined by the horizontal part (in fact, by $z=A(E)$\,).\\
\indent
(b) $(M,g)$ is self-dual if and only if the trace-free part of $\int_{\g}\r\,(\Lie_E\!h)^{\C\!}\dif\!\r$ is zero, on~$U^{\C}$.\\
\indent
(c) $F_0$ and $F_1$ uniquely determine all $F_j$\,, with $j\in\mathbb{N}$\,.
\end{rem}

\indent
We end with a particular case.

\begin{cor} \label{cor:Ric_s_thm_Beltrami}
Let $(M,g)$ be a Riemannian manifold given by the Beltrami fields construction,
with $A$ the corresponding one-form on $N$.\\
\indent
Suppose that $E$ admits a complexification on a set containg $U\times\g$\,, where $U$ is an open subset of $N$
and $\g$ is a circle on $\C\!$, centred at $0$\,.\\
\indent
Then $E=fV+F$, with $F=xX+yY+zZ$ and $z$ a function on $N$, is a soliton flow, with the corresponding constant $a$\,,
if and only if the following assertions hold:\\
\indent
\quad{\rm (i)} $a=0$\,, $f=z|A|$ and $Z(f)=0$\,;\\
\indent
\quad{\rm (ii)} $x+{\rm i}y=u\,e^{-{\rm i}\r^{-2}|A|}+\frac{1}{2{\rm i}|A|}(X+{\rm i}\,Y)(f)$\,, where $u$
is a function on $N$;\\
\indent
\quad{\rm (iii)} $\int_{\g}\r\,(\Lie_E\!h)^{\C\!}\dif\!\r+2\RicN-4h=0$\,, on $U^{\C}$;\\
\indent
\quad{\rm (iv)} $\int_{\g}\r^{-j+1}(\Lie_E\!h)^{\C\!}\dif\!\r=A\odot\int_{\g}\r^{-j-1}(F^{\flat})^{\C\!}\dif\!\r$\,, on $U^{\C}$,
for any $j\in\mathbb{Z}\setminus\{0\}$\,.
\end{cor}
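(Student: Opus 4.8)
The plan is to specialise Corollary \ref{cor:Ric_s_Beltrami_Laurent} to the standing hypothesis that $z$ is a function on $N$, i.e. $z_j=0$ for $j\neq0$ and $z_0=z$, and then to repackage the resulting eight families of Laurent relations into the four assertions. The scalar data come out first: with $z_j=0$ for $j\neq0$, the first relation of \eqref{e:Ric_s_Beltrami_Laurent} forces $f_j=0$ for $j\notin\{0,1,2\}$ and $f_0=z|A|$, while the second gives $f_2=-a$. The fifth relation carries its $z$-term as $(j-2)z_{j-2}$, which vanishes at the only index $j=2$ where $z_{j-2}\neq0$; hence it involves $f$ alone and yields $f_1=0$ (at $j=-1$) together with the identity $Z(z|A|)+2a|A|=0$ (at $j=0$). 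Thus $f=z|A|-a\rho^2$ and the whole vertical part is determined by $z$ and $a$.

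To finish (i), namely $a=0$, $f=z|A|$ and $Z(f)=0$, I would invoke the $j=2$ instance of the tensorial relations, the eighth equation of \eqref{e:Ric_s_Beltrami_Laurent}: since $F_2=0$ and $f_2-z_2|A|+a=0$, it collapses to $\Lie_{F_0}h=0$, so $F_0$ is a Killing field. Contracting $\Lie_{F_0}h=0$ twice with $A^\sharp$, and using that the Beltrami condition makes $A^\sharp$ a divergence-free curl-eigenfield (so $\nabla_{A^\sharp}A^\sharp=\tfrac12\nabla|A|^2$), one obtains $F_0(|A|)=-2a|A|$; combining this with $Z(z|A|)+2a|A|=0$ and the trace condition $\Div F_0=0$ should force $a=0$, whence $f=z|A|$ and $Z(f)=0$ follow immediately.

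Next I would treat the two middle relations of \eqref{e:Ric_s_thm_Beltrami}, the ones carrying $\rho^3\partial_\rho x$ and $\rho^3\partial_\rho y$. Passing to $w:=x+\mathrm{i}y$ diagonalises the $\pm2|A|$ coupling and turns them into the single linear equation $\rho^3\,\partial_\rho w-2\mathrm{i}|A|\,w+(X+\mathrm{i}Y)(f)=0$. Its homogeneous solution is $u\,e^{-\mathrm{i}\rho^{-2}|A|}$ (the exponent being $\int 2\mathrm{i}|A|\rho^{-3}\,d\rho$), and since $(X+\mathrm{i}Y)(f)=(X+\mathrm{i}Y)(z|A|)$ is independent of $\rho$, a constant particular solution is $\tfrac{1}{2\mathrm{i}|A|}(X+\mathrm{i}Y)(f)$; this is precisely (ii). Equivalently one solves the $w_j$-recursion supplied by the third and fourth relations of \eqref{e:Ric_s_Beltrami_Laurent} and checks that convergence of the Laurent series on the annulus kills the odd and the positive-even coefficients.

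Finally I would convert the remaining tensorial relations into the contour integrals by the residue device already used for Theorem \ref{thm:Ric_s_thm_Beltrami}: for a $\rho$-Laurent series the integral $\int_{\g}\rho^{-k-1}(\cdot)\,d\rho$ isolates the coefficient of $\rho^{k}$, while the identity $(\Lie_{\rho^j\widetilde{F_j}}h)(\widetilde{S},\widetilde{S})=\rho^j(\Lie_{F_j}h)(S,S)-2j\rho^{j-2}(A\odot F_j^{\flat})(S,S)$ accounts for the $A\odot F^{\flat}$ corrections produced by the $\rho$-dependence of the horizontal lifts. The $j=0$ relation, in which $f_0-z_0|A|=0$, becomes $\Lie_{F_{-2}}h+2\RicN-4h=0$, i.e. assertion (iii); the relations for $j\neq0$, whose $h$-coefficient again vanishes, collapse to $\Lie_{F_{j-2}}h=2j\,A\odot F_j^{\flat}$ and reassemble into (iv). The converse is obtained by reading the chain of equivalences backwards, reconstituting every relation of \eqref{e:Ric_s_Beltrami_Laurent} from (i)--(iv) and quoting Corollary \ref{cor:Ric_s_Beltrami_Laurent}. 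The main obstacle I anticipate is the derivation of $a=0$: unlike the other assertions it is invisible at the purely scalar level, and extracting it seems to require the Killing condition $\Lie_{F_0}h=0$ to interact with the Beltrami geometry of $A$ through several contractions and a trace; a close second difficulty is the careful bookkeeping of normalisations and of the $A\odot F^{\flat}$ terms when matching the coefficient relations to the integrals (iii) and (iv).
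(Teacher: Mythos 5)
Your overall strategy --- specialise Corollary \ref{cor:Ric_s_Beltrami_Laurent} to $z_j=0$ for $j\neq0$, solve the $\pm2|A|$-coupled pair by passing to $w=x+{\rm i}y$ and integrating the first order linear equation $\r^3\partial_\r w-2{\rm i}|A|w+(X+{\rm i}Y)(f)=0$, and read off (iii) and (iv) as residues of the tensorial relations --- is essentially the route the paper takes, except that the paper works directly with \eqref{e:Ric_s_thm_Beltrami}\,: the first relation integrates to $f=-a\r^2+v\r+z|A|$, the fourth relation is then expanded in powers of $\r$ to obtain (i), the second and third relations give (ii), and the fifth relation, via Corollary \ref{cor:Ric_s_Beltrami_Laurent}\,, gives (iii) and (iv). Those parts of your argument are sound, including the observation that $F_2=0$ (the $j=2$ instance of the third and fourth relations of \eqref{e:Ric_s_Beltrami_Laurent} kills $x_2$ and $y_2$ since $f_2=-a$ is constant), so that the eighth relation reduces to $\Lie_{F_0}h=0$\,.

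The genuine gap is exactly where you suspect it: the derivation of $a=0$ in assertion (i). From the scalar relations you correctly extract $f=z|A|-a\r^2$ together with the $\r^0$-coefficient identity $Z(z|A|)+2a|A|=0$ (the paper extracts the same data from the $\r^{-1}$, $\r^0$ and $\r^1$ coefficients of the fourth relation of \eqref{e:Ric_s_thm_Beltrami} and concludes $a=v=Z(v)=0$ at that point, without recourse to the tensorial relations). Your proposed repair, however, is circular. Contracting $\Lie_{F_0}h=0$ twice with $A^{\sharp}$ and using $\nabla_{A^{\sharp}}A^{\sharp}=\tfrac12\grad|A|^2$ yields $Z(z|A|)=F_0(|A|)$\,, so your conclusion $F_0(|A|)=-2a|A|$ is just the identity $Z(z|A|)=-2a|A|$ that you already had, rewritten; it contains no new information about $a$\,. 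Likewise $\Div F_0=0$ is merely the trace of $\Lie_{F_0}h=0$ and adds nothing. And locally an equation of the form $F_0(\ln|A|)=-2a$\,, with $F_0$ Killing and $a$ constant, does not force $a=0$\,: there is no compactness or completeness hypothesis available to run a maximum-principle argument on $|A|$\,. As written, then, your chain of equivalences proves the corollary only with (i) weakened to ``$f=z|A|-a\r^2$ and $Z(z|A|)+2a|A|=0$''; to match the stated (i) you must supply an actual derivation of $a=0$ --- the paper locates it in the expansion of the fourth relation of \eqref{e:Ric_s_thm_Beltrami} alone --- rather than the contraction argument you sketch.
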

\begin{proof}
As $\frac{\partial z}{\partial\r}=0$\,, the first assertion of \eqref{e:Ric_s_thm_Beltrami} is equivalent to
$$\frac{\partial}{\partial\r}(\r^{-1}f+a\r-\r^{-1}z|A|)=0\;;$$
equivalently, $f=-a\r^2+v\r+z|A|$\,, where $v$ is a function on $N$.\\
\indent
By using this, we obtain that the fourth relation of \eqref{e:Ric_s_thm_Beltrami} is equivalent to
$$-|A|^{-1}\r^{-1}(v-2a\r)+\r\,Z(v)+Z(z|A|)=0\;;$$
that is, $a=v=Z(v)=0$\,.\\
\indent
Then the second and the third relations of \eqref{e:Ric_s_thm_Beltrami} are equivalent to assertion (ii)\,.
Also, by using Corollary \ref{cor:Ric_s_Beltrami_Laurent}\,, we obtain that the fifth relation of \eqref{e:Ric_s_thm_Beltrami}
is equivalent to (iii) and (iv)\,.
\end{proof}


\begin{thebibliography}{10}


\bibitem{Bai-GT}
P.~Baird, A class of three-dimensional Ricci solitons,
\textit{Geom. Topol.}, {\bf 13} (2009) 979--1015.

\bibitem{BaiDan}
P.~Baird, L.~Danielo, Three-dimensional Ricci solitons which project to surfaces,
\textit{J. Reine Angew. Math.}, {\bf 608} (2007) 65--91.

\bibitem{BaiPan} P.~Baird, R.~Pantilie, Harmonic morphisms on heaven spaces, 
\textit{Bull. London Math. Soc.}, {\bf 41} (2) (2009) 198--204.

\bibitem{BaiWoo2}
P.~Baird, J.~C.~Wood, \textit{Harmonic morphisms between Riemannian manifolds},
London Math. Soc. Monogr. (N.S.), no. 29, Oxford Univ. Press, Oxford, 2003.

\bibitem{Ch-etal} 
B.~ Chow, S-C.~ Chu, D.~ Glickenstein, C. ~Guenther, J.~ Isenberg, T.~ Ivey, D.~ Knopf, P.~ Lu, F.~ Luo and L.~ Ni, 
\textit{The Ricci flow: Techniques and Applications, Part 1: Geometric aspects}, 
AMS Mathematical Surveys and monographs, no. 135, 2007.

\bibitem{Ha2} 
R.~ Hamilton, A compactness property for solutions of the Ricci flow, 
\textit{Amer. J. Math.}, {\bf 117} (1995) 545--572.

\bibitem{Iv-1} T.~ Ivey, Ricci solitons on compact three-manifolds, 
\textit{Differential Geom Appl.}, {\bf 3} (4) (1993) 301--307.

\bibitem{Ko}  N.~ Koiso, On rotationally symmetric Hamilton's equation for K\"ahler-Einstein metrics,  
\textit{Recent topics in differential and analytic geometry}, 327--337, Adv. Stud. Pure Math., 18-I, Academic Press, Boston, MA, 1990.

\bibitem{Lo} J.~ Lott, On the long-time behaviour of type-III Ricci flow solutions, 
\textit{Math. Annalen}, {\bf 339} (2007) 627--666.

\bibitem{LouPan-II}
E.~Loubeau, R.~Pantilie, Harmonic morphisms between Weyl spaces and twistorial maps II,
\textit{Ann. Inst. Fourier (Grenoble)}, {\bf 60} (2010) 433--453.

\bibitem{PanWoo-exm}
R.~Pantilie, J.~C.~Wood, A new construction of Einstein self-dual
manifolds, \textit{Asian J. Math.}, {\bf 6} (2002) 337--348.

\bibitem{PanWoo-sd}
R.~Pantilie, J.~C.~Wood, Twistorial harmonic
morphisms with one-dimensional fibres on self-dual four-manifolds,
\textit{Q. J. Math.}, {\bf 57} (2006) 105--132.

\end{thebibliography}
\end{document}